\documentclass{amsart}
\usepackage{amssymb}
\usepackage{color}
\usepackage{hyperref}
\usepackage{tikz-cd}
\usepackage{rotating}
\newcommand*{\isoarrow}[1]{\arrow[#1,"\rotatebox{90}{\(\simeq\)}"
	]}
\DeclareFontFamily{U}{wncy}{}
\DeclareFontShape{U}{wncy}{m}{n}{<->wncyr10}{}
\DeclareSymbolFont{mcy}{U}{wncy}{m}{n}
\DeclareMathSymbol{\Sha}{\mathord}{mcy}{"58}

\newtheorem{theorem}{Theorem}[section]
\newtheorem{lemma}[theorem]{Lemma}
\newtheorem{proposition}[theorem]{Proposition}

\newtheorem{corollary}[theorem]{Corollary}
\theoremstyle{definition}
\newtheorem{definition}[theorem]{Definition}
\newtheorem{example}[theorem]{Example}

\newtheorem{remark}[theorem]{Remark}
\numberwithin{equation}{section}
\usepackage[all]{xy}
\DeclareMathOperator{\Po}{Po}
\DeclareMathOperator{\Ost}{Ost}
\DeclareMathOperator{\Pic}{Pic}
\DeclareMathOperator{\Cl}{Cl}

\DeclareMathOperator{\Gal}{Gal}

\DeclareMathOperator{\Ker}{Ker}
\DeclareMathOperator{\Coker}{Coker}
\DeclareMathOperator{\Image}{Image}
\DeclareMathOperator{\Hom}{Hom}
\DeclareMathOperator{\cts}{cts}
\DeclareMathOperator{\Inf}{Inf}
\DeclareMathOperator{\Res}{Res}

\DeclareMathOperator{\trans}{trans}
\DeclareMathOperator{\Trans}{Trans}

\begin{document}
	
	\title{The Ostrowski quotient of an elliptic curve}

	\author{Abbas Maarefparvar}
	\address{School of Mathematics, Institute for Research in Fundamental Sciences (IPM), P.O. Box: 19395-5746, Tehran, Iran.}
	\curraddr{}
	\email{a.marefparvar@ipm.ir}
	\thanks{This research was supported by a grant from IPM}
	\date{}
	\dedicatory{}
	\commby{}
	
	\subjclass[2010]{Primary 11R34, 11G05}

	\begin{abstract}
		For $K/F$ a finite Galois extension of number fields, the relative P\'olya group $\Po(K/F)$ is the subgroup of the ideal class group of $K$ generated by all the strongly ambiguous ideal classes in $K/F$. The notion of Ostrowski  quotient $\Ost(K/F)$, as the cokernel of the capitulation map into $\Po(K/F)$, has been recently introduced in \cite{SRM}.
		 In this paper, using some results of Gonz\'alez-Avil\'es \cite{Aviles}, we find a new approach to define $\Po(K/F)$ and $\Ost(K/F)$ which is the main motivation for us to investigate analogous notions in the elliptic curve setting.  For $E$ an elliptic curve defined over $F$, we define the Ostrowski quotient $\Ost(E,K/F)$ and the coarse Ostrowski quotient $\Ost_c(E,K/F)$ of $E$ relative to $K/F$, for which in the latter group we do not take into account primes of bad reduction. Our main result is a non-trivial structure theorem for the group $\Ost_c(E,K/F)$ and we analyze this theorem, in some details, for the class of curves $E$ over quadratic extensions $K/F$.
	\end{abstract}

	\maketitle
	
	\vspace{.2cm} {\noindent \bf{Keywords:}}~ Tate-Shafarevich group,  transgression map, localization map, Ostrowski quotient, coarse Ostrowski quotient.

	\vspace{0.3cm}	
	
	{\noindent \bf{Notation.}}~  The following notations will be used throughout this article:

For a number field $F$, the notations $I(F)$, $P(F)$, $\Cl(F)$, $h_F$, $\mathcal{O}_F$, $U_F$ and $\mathbb{P}_F$ denote the group of fractional ideals, group of principal fractional ideals, ideal class group, class number, ring of integers, group of units and set of all prime ideals of $F$, respectively. For a finite extension $K/F$ of number fields, we use  $N_{K/F}$ to denote both the ideal norm and the element norm map from $K$ to $F$. Also ${\epsilon}_{K/F}:\Cl(F) \rightarrow \Cl(K)$ denotes the capitulation map induced by the extension homomorphism from $I(F)$ to $I(K)$. For a prime ideal $\mathfrak{p} \in \mathbb{P}_F$, we denote by $e_{\mathfrak{p}(K/F)}$   the ramification index of $\mathfrak{p}$ in $K/F$.

\vspace{.2cm}	
	
	\section{Introduction}  \label{section, Introduction}
	
Let $K$ be an algebraic number field. For a prime power $q$, the \textit{Ostrowski ideal} $\Pi_{q}(K)$ of $K$ is defined as follows \cite{Ostrowski}:
\begin{equation} \label{equation, Ostrowski ideals}
	\Pi_{q}(K) :=\prod_{\substack{\mathfrak{p}\in\mathbb{P}_K\\ N_{K/\mathbb{Q}}(\mathfrak{p})=q}} \mathfrak{p}.
\end{equation}
By convention, if $K$ has no ideal with norm $q$, we set $\Pi_{q}(K)=\mathcal{O}_K$. The number field $K$ is called a \textit{P\'olya field} if all the Ostrowski ideals $\Pi_{q}(K)$ for arbitrary prime powers $q$ are principal \cite{Zantema}. The notion of \textit{P\'olya-Ostrowski group} or \textit{P\'olya group} was introduced in \cite[Chapter II, $\S$3]{Cahen-Chabert's book}.
\begin{definition} \label{definition, Polya group}
	The P\'olya group of a number field $K$ is the subgroup $\Po(K)$ of
	the class group $\Cl(K)$ generated by the classes of all the Ostrowski ideals $\Pi_q(K)$.
\end{definition} 

Hence $K$ is a P\'olya field if and only if $\Po(K)$ is trivial. Obviously every number field with class number one is a P\'olya field, but not conversely. For instance, Zantema proved that every cyclotomic field is a P\'olya field \cite[Proposition 2.6]{Zantema}. The reader is referred to \cite{Cahen-Chabert's book, ChabertI, MR1, MR2, Zantema} for some results on P\'olya fields and P\'olya groups. Recently the notion of P\'olya group has been relativized  independently in \cite{MaarefparvarThesis, ChabertI}.
	
	\begin{definition} 
		For $K/F$ a finite extension of number fields, the \textit{relative P\'olya group} of $K$ over $F$, is the subgroup of $\Cl(K)$ generated by the classes of the \textit{relative Ostrowski ideals}
		\begin{align} \label{equation, relative Ostrowski ideals}
			\Pi_{\mathfrak{p}^f}(K/F):=\prod_{\substack{\mathfrak{P}\in \mathbb{P}_K \\ N_{K/F}(\mathfrak{P})=\mathfrak{p}^f}} \mathfrak{P},
		\end{align}
		where $\mathfrak{p}$ is a prime ideal of $F$, $f$ is a positive integer and by convention, if $K$ has no ideal with relative norm $\mathfrak{p}^f$ (over $F$), then $\Pi_{\mathfrak{p}^f}(K/F)=\mathcal{O}_K$.
		We denote the relative P\'olya group of $K$ over $F$ by $\Po(K/F)$. In particular, $\Po(K/\mathbb{Q})=\Po(K)$ and $\Po(K/K)=\Cl(K)$.
	\end{definition}

	If $K/F$ is a Galois extension with Galois group $G$, the ideals $\Pi_{\mathfrak{P}^f}(K/F)$ freely generate the ambiguous ideals $I(K)^G$ \cite[Proof of Proposition 2.2]{Brumer-Rosen}. Hence in this case,  $\Po(K/F)$ coincides with the group of strongly ambiguous ideal classes of $K/F$. Moreover, using some results of Brumer and Rosen \cite[$\S$2]{Brumer-Rosen}, one may find the following exact sequence which is also obtained by Zantema in the absolute case \cite[$\S$3]{Zantema}.

	\begin{proposition} \cite{ChabertI,MR2} \label{theorem, generalization of the Zantema's exact sequence}
		For $K/F$ a finite Galois extension of number fields with Galois group $G$, the following sequence is exact:
		{\small
			\begin{equation*} \label{equation, BRZ exact sequence}
				\tag{BRZ} 
				0 \rightarrow \Ker({\epsilon}_{K/F}) \rightarrow H^1(G,U_K) \rightarrow\bigoplus_{\mathfrak{p} \in \mathbb{P}_F} \frac{\mathbb{Z}}{e_{\mathfrak{p}(K/F)}\mathbb{Z}} \rightarrow \frac{\Po(K/F)}{{\epsilon}_{K/F}(\Cl(F))} \rightarrow 0.
		\end{equation*}}
	\end{proposition}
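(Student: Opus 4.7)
My plan is to build a commutative diagram of short exact sequences of abelian groups and apply the snake lemma. Consider the two rows
\begin{equation*}
0 \to P(F) \to I(F) \to \Cl(F) \to 0, \qquad 0 \to P(K)^G \to I(K)^G \to I(K)^G/P(K)^G \to 0,
\end{equation*}
with vertical maps given by the extension of fractional ideals $P(F)\hookrightarrow P(K)^G$ and $I(F)\hookrightarrow I(K)^G$ (both injective, since already ideals of $F$ inject into ideals of $K$), and the induced map on quotients. To identify that last map, I would first note that $I(K)^G\cap P(K)=P(K)^G$: an ambiguous principal ideal is automatically a $G$-invariant principal ideal. Consequently $I(K)^G/P(K)^G$ is exactly the image of the ambiguous ideal classes in $\Cl(K)$, i.e.\ $\Po(K/F)$, and the induced vertical map $\Cl(F)\to \Po(K/F)$ is the capitulation map $\epsilon_{K/F}$.

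Next I would compute the two cokernels that will appear in the snake sequence. For the middle one, apply the result of Brumer--Rosen already quoted in the excerpt: $I(K)^G$ is freely generated by the relative Ostrowski ideals $\Pi_{\mathfrak{p}^{f_\mathfrak{p}}}(K/F)$, one for each $\mathfrak{p}\in\mathbb{P}_F$ (in a Galois extension a single residue degree $f_\mathfrak{p}$ occurs above $\mathfrak{p}$). Because $\mathfrak{p}\mathcal{O}_K=\Pi_{\mathfrak{p}^{f_\mathfrak{p}}}(K/F)^{e_{\mathfrak{p}(K/F)}}$, the map $I(F)\hookrightarrow I(K)^G$ becomes, on free generators, multiplication by $e_{\mathfrak{p}(K/F)}$ in each $\mathfrak{p}$-component, and hence
\begin{equation*}
\Coker\bigl(I(F)\hookrightarrow I(K)^G\bigr)\;\cong\;\bigoplus_{\mathfrak{p}\in \mathbb{P}_F}\mathbb{Z}/e_{\mathfrak{p}(K/F)}\mathbb{Z}.
\end{equation*}
For the left cokernel, take $G$-cohomology of the tautological sequence $0\to U_K\to K^*\to P(K)\to 0$; using Hilbert 90 ($H^1(G,K^*)=0$), the long exact sequence collapses to $0\to P(F)\to P(K)^G\to H^1(G,U_K)\to 0$, so $\Coker(P(F)\hookrightarrow P(K)^G)\cong H^1(G,U_K)$.

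Finally, feed the diagram into the snake lemma: both left kernels vanish, the right kernel is $\Ker\epsilon_{K/F}$, the right cokernel is $\Po(K/F)/\epsilon_{K/F}(\Cl(F))$ by construction, and the middle/left cokernels are the groups computed above. The six-term snake sequence collapses to the asserted four-term exact sequence (BRZ). The only real bookkeeping obstacle is the identification $I(K)^G\cap P(K)=P(K)^G$ together with the Brumer--Rosen free generation statement; once these are in place the snake lemma delivers the result mechanically and agrees with Zantema's absolute sequence upon taking $F=\mathbb{Q}$.
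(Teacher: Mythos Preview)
Your proof is correct. The paper does not actually give its own proof of this proposition---it cites \cite{ChabertI,MR2} and only remarks that Brumer--Rosen's description of $I(K)^G$ together with Zantema's cohomological computations are the main tools---and your argument deploys precisely these two ingredients (free generation of $I(K)^G$ by the relative Ostrowski ideals, and Hilbert~90 applied to $0\to U_K\to K^\times\to P(K)\to 0$) packaged via the snake lemma, so it is the intended argument.
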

	
	\begin{remark}
	Note that	``BRZ'' stands for Brumer-Rosen \cite{Brumer-Rosen} and Zantema \cite{Zantema} whose cohomological results are the main tools to prove the above proposition, see \cite[Remark 1.4]{SRM}.
	\end{remark}

	As a modification of the relative P\'olya group, the notion of ``Ostrowski quotient'' has been recently introduced in \cite{SRM}.

	\begin{definition}  \label{definition, Ostrowski group}
		For $K/F$ a finite extension of number fields, the \textit{Ostrowski quotient} $\Ost(K/F)$ is defined as  
		\begin{equation} \label{equation, Ostrowski group general case}
			\Ost(K/F) := \frac{\Po(K/F)}{\Po(K/F) \cap \epsilon_{K/F}(Cl(F))}.
		\end{equation} 
		In particular, $\Ost(K/\mathbb{Q})=\Po(K/\mathbb{Q})=\Po(K)$ and $\Ost(K/K)=0$. The extension $K/F$ is called \textit{Ostrowski} (or $K$ is called $F$-Ostrowski) whenever $\Ost(K/F)$ is trivial.
	\end{definition}

	\begin{remark} \label{remark, if L/K is Galois then epsilon of Cl(K) is contained in relative Polya group}
		If $K/F$ is Galois, then $\epsilon_{K/F}(\Cl(F)) \subseteq \Po(K/F)$ \cite[$\S$ 2]{MR2}. Hence
		\begin{equation} \label{equation, for K/F Galois extension, Ost(K/F) equals to Po(K/F) modulo image of epsilon}
			\Ost(K/F)=\frac{\Po(K/F)}{\epsilon_{K/F}(\Cl(F))}.
		\end{equation}
	\end{remark}

	Some applications of \eqref{equation, BRZ exact sequence}, especially generalizations of some results for P\'olya fields, have been found in  \cite[$\S2$]{MR2}. Moreover, using \eqref{equation, BRZ exact sequence} one may find simple proofs for some classical well known results in the literature for the capitulation problem \cite[$\S$2.2.1]{SRM}. For more applications of \eqref{equation, BRZ exact sequence}, see \cite{ChabertI,MR2,SRM}.

\section{Localization and transgression maps} \label{section, Localization and transgression maps}

From now on, we fix the following notations:

\begin{itemize}
	\item[•]
	$K/F$: a finite Galois extension of number fields with Galois group $G$;
	
	\vspace*{0.15cm}
	\item[•]
	$G_F:=Gal(\overline{F}/F)$, where $\overline{F}$ is the algebraic closure of $F$;
	\vspace*{0.15cm}
	\item[•]
	$M_F$: a complete set of places of $F$;
	\vspace*{0.15cm}
	
		\item[•]
	$M_F^{\infty}$: the set of all infinite places of $F$;
		\vspace*{0.15cm}
	\item[•]
	$M_F^0$: the set of all finite places of $F$;
	\vspace*{0.15cm}
	\vspace*{0.15cm}
	\item[•]
	$F_v$: the completion of $F$ at a place $v \in M_F$;
	\vspace*{0.15cm}
	\item[•]
	$w$: a fixed place of $K$ lying above $v \in M_F$;
	\vspace*{0.15cm}
	\item[•]
	$U_{K_{w}}$: the group of local units in $K_{w}$;
	\vspace*{0.15cm}
	\item[•]
	$G_{w}:=Gal(K_{w}/F_v)$ for a place $v \in M_F$;
	\vspace*{0.15cm}
	\item[•]
	$G_{F_v}:=Gal(\overline{F_v}/F_v)$, where $\overline{F_v}$ is the algebraic closure of $F_v$.
\end{itemize}
\vspace*{0.15cm}

Let $S$ be a finite set of primes of $F$ with $M_F^{\infty} \subseteq S$. 
In \cite{Aviles}, Gonz\'alez-Avil\'es found some interesting results on the kernel and cokernel of the \textit{$S$-capitulation map}. Assuming $S=M_F^{\infty}$, we summarize a few results of Gonz\'alez-Avil\'es which we need for the rest of the paper.

\begin{proposition} \cite[$\S$2]{Aviles} \label{proposition, Aviles results}
	For $K/F$ a finite Galois extension of number fields with Galois group $G$, the following sequences are exact:
\begin{align}
	& 0 \rightarrow P(K)^G \rightarrow I(K)^G \rightarrow \Cl(K)_{\trans}^G \rightarrow 0,
	\\
	&0 \rightarrow \Cl(K)_{\trans}^G \rightarrow \Cl(K)^G  \rightarrow  H^1(G, P(K)) \rightarrow 0, 
	\\
	& 0  \rightarrow H^1(G,P(K)) \rightarrow H^2(G,U_K) \rightarrow H^2(G,K^{\times}),
\end{align}
		{\footnotesize
			\begin{equation} \label{equation, Aviles exact sequence}
				0 \rightarrow    \Ker({\epsilon}_{K/F}) \rightarrow  H^1(G,U_K)   \xrightarrow{\lambda}  \bigoplus_{v \in M_F^0} H^1(G_w,U_{K_{w}}) \rightarrow \frac{\Cl(K)^G_{\trans}}{\epsilon_{K/F}(\Cl(F))} \rightarrow  0.
		\end{equation}}
\end{proposition}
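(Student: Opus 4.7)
My plan is to deduce the four sequences in succession, the first three by routine Galois cohomology and the last by a snake lemma that assembles them.

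The first sequence is tautological once one notes that $P(K)^G = P(K) \cap I(K)^G$: the quotient $\Cl(K)^G_{\trans} := I(K)^G/P(K)^G$ is by construction identified with the image of $I(K)^G$ inside $\Cl(K)$. For the second sequence, I would feed the short exact sequence $0 \to P(K) \to I(K) \to \Cl(K) \to 0$ into the long exact sequence of $G$-cohomology. Decomposing $I(K) \cong \bigoplus_{\mathfrak{p} \in \mathbb{P}_F} \mathrm{Ind}_{D_{\mathfrak{P}}}^{G}\mathbb{Z}$ and invoking Shapiro's lemma, one gets $H^1(G,I(K)) = \bigoplus_{\mathfrak{p}} H^1(D_{\mathfrak{P}}, \mathbb{Z}) = 0$, since each $D_{\mathfrak{P}}$ is a finite group acting trivially on $\mathbb{Z}$. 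The long exact sequence then truncates to a four-term exact sequence, and splitting off the image of $I(K)^G$ separates it into the first and second sequences. The third sequence comes analogously from $0 \to U_K \to K^\times \to P(K) \to 0$ after Hilbert 90 forces $H^1(G, K^\times) = 0$.

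The heart of the proposition is the fourth sequence. I would apply the snake lemma to the commutative diagram with exact rows
\[
\begin{array}{ccccccccc}
0 & \to & P(F) & \to & I(F) & \to & \Cl(F) & \to & 0 \\
  &     & \downarrow & & \downarrow & & \downarrow\epsilon_{K/F} & & \\
0 & \to & P(K)^G & \to & I(K)^G & \to & \Cl(K)^G_{\trans} & \to & 0,
\end{array}
\]
whose vertical maps are $\mathfrak{a} \mapsto \mathfrak{a}\mathcal{O}_K$ (injective on the left two columns) and whose bottom row is the first sequence. To identify the cokernels: taking $G$-invariants of $0 \to U_K \to K^\times \to P(K) \to 0$ and applying Hilbert 90 yields $P(K)^G/P(F) \cong H^1(G, U_K)$; a direct comparison of generators (using that the $\Pi_{\mathfrak{p}^f}(K/F)$ freely generate $I(K)^G$) gives $I(K)^G/I(F)\mathcal{O}_K \cong \bigoplus_{\mathfrak{p} \in \mathbb{P}_F} \mathbb{Z}/e_{\mathfrak{p}(K/F)}\mathbb{Z}$; and at each finite place $v$, the local short exact sequence $0 \to U_{K_w} \to K_w^\times \xrightarrow{\ord} \mathbb{Z} \to 0$ combined with local Hilbert 90 identifies $H^1(G_w, U_{K_w})$ with $\mathbb{Z}/e_{\mathfrak{p}(K/F)}\mathbb{Z}$. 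Feeding these identifications into the snake sequence yields the desired fourth sequence, with last term $\Cl(K)^G_{\trans}/\epsilon_{K/F}(\Cl(F))$.

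The hard part will be matching the snake morphisms with their cohomological interpretations: checking that the connecting map $\Ker(\epsilon_{K/F}) \to H^1(G, U_K)$ sends $[\mathfrak{a}]$ to the cocycle class $[\sigma \mapsto \sigma(\alpha)/\alpha]$ (where $\mathfrak{a}\mathcal{O}_K = \alpha\mathcal{O}_K$), and that the induced arrow $H^1(G, U_K) \to \bigoplus_v H^1(G_w, U_{K_w})$ agrees with the natural semi-local restriction map. Both are diagram chases, but they are what give the snake output its genuinely cohomological, localized meaning in the sense of Gonz\'alez-Avil\'es.
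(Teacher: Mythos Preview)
Your argument is sound, but there is nothing in this paper to compare it against: Proposition~2.1 is quoted without proof from Gonz\'alez-Avil\'es \cite[\S 2]{Aviles}, and the paper uses it as a black box. So strictly speaking there is no ``paper's own proof'' here.

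That said, one point of presentation is worth flagging. You write $\Cl(K)^G_{\trans} := I(K)^G/P(K)^G$ as though it were a definition, whereas in the paper (following \cite{Aviles}) $\Cl(K)^G_{\trans}$ is \emph{defined} as the kernel of the transgression composite $\Cl(K)^G \to H^1(G,P(K)) \hookrightarrow H^2(G,U_K)$; see Remark~2.2. Your derivation of the second and third sequences does establish that these two descriptions agree (the kernel of the composite equals the image of $I(K)^G$ in $\Cl(K)^G$, since the map into $H^2(G,U_K)$ is injective and $H^1(G,I(K))=0$), so there is no gap --- but you should make that identification explicit rather than silently swapping definitions.

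Otherwise the snake-lemma derivation of the fourth sequence, together with the identifications $P(K)^G/P(F)\cong H^1(G,U_K)$ and $H^1(G_w,U_{K_w})\cong \mathbb{Z}/e_{\mathfrak p(K/F)}\mathbb{Z}$, is exactly the kind of argument one finds in \cite{Brumer-Rosen} and \cite{Aviles}, and the compatibility checks you single out at the end are indeed the only places requiring care.
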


\begin{remark} \label{remark, transgression map in Aviles paper}
	The composite  $\Cl(K)^G \rightarrow H^1(G,P(K)) \hookrightarrow H^2(G,U_K)$ is called the  \textit{transgression map} which we denote it by $\trans_{K/F}$. Also
	 $\Cl(K)^G_{\trans}$, the kernel of $\trans_{K/F}$, is called \textit{the group of transgressive ambiguous classes}, see \cite[$\S$2]{Aviles}. It must be noted that the transgression map is defined in a more general case as $\Trans_{K/F}:H^1(G_K,U_{\overline{K}})^G \rightarrow H^2(G,U_K)$, see \cite[$\S$3]{Serre} or \cite[$\S$4]{Yu} for the definition (We call both as transgression map and use different notations to avoid confusion).
\end{remark}

Using Proposition \ref{proposition, Aviles results}, we find the following description of relative P\'olya group and Ostrowski quotient.

\begin{corollary} \label{corollary, Po(L/K) coincides with Cl(L)_trans}
	For $K/F$ a finite Galois extension of number fields with Galois group $G$, we have
	\begin{equation*}
		\Po(K/F)=\Cl(K)^G_{\trans} \quad \text{and} \quad \Ost(K/F) \simeq \Coker(\lambda),
	\end{equation*}
	where 
	\begin{equation*}
		\lambda: H^1(G,U_K)   \rightarrow  \bigoplus_{\substack{v \in M_F^0}} H^1(G_w,U_{K_{w}})
	\end{equation*}
	denotes the localization map as in the exact sequence \eqref{equation, Aviles exact sequence}.
\end{corollary}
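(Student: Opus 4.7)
The plan is to extract both identities essentially from Proposition 2.1 together with the Brumer--Rosen description of $I(K)^G$ already recalled in the introduction. Let me describe the two steps.

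First I would prove $\Po(K/F)=\Cl(K)^G_{\trans}$. Since $K/F$ is Galois, the relative Ostrowski ideals $\Pi_{\mathfrak{p}^f}(K/F)$ freely generate the ambiguous ideals $I(K)^G$ (as noted after the definition of $\Po(K/F)$, via Brumer--Rosen \cite{Brumer-Rosen}). Consequently $\Po(K/F)$ is precisely the image of the natural map $I(K)^G\to \Cl(K)$; equivalently $\Po(K/F)\simeq I(K)^G/(I(K)^G\cap P(K))=I(K)^G/P(K)^G$. But the first exact sequence of Proposition \ref{proposition, Aviles results} reads
\begin{equation*}
0\to P(K)^G\to I(K)^G\to \Cl(K)^G_{\trans}\to 0,
\end{equation*}
so $I(K)^G/P(K)^G\simeq \Cl(K)^G_{\trans}$, and the two subgroups of $\Cl(K)$ coincide.

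Once this is in hand, the second statement is formal. Remark \ref{remark, if L/K is Galois then epsilon of Cl(K) is contained in relative Polya group} gives, for $K/F$ Galois,
\begin{equation*}
\Ost(K/F)=\frac{\Po(K/F)}{\epsilon_{K/F}(\Cl(F))}=\frac{\Cl(K)^G_{\trans}}{\epsilon_{K/F}(\Cl(F))},
\end{equation*}
using the identification just established. The exact sequence \eqref{equation, Aviles exact sequence} in Proposition \ref{proposition, Aviles results} identifies this last quotient with $\Coker(\lambda)$, finishing the proof.

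There is no serious obstacle: the content of the corollary is the observation that the Aviles exact sequence \eqref{equation, Aviles exact sequence} is a refined (unwrapped) version of the BRZ sequence, once one recognizes that the image of $I(K)^G$ in $\Cl(K)$ — the standard definition of strongly ambiguous classes, and equivalently of $\Po(K/F)$ by Brumer--Rosen — is the same as the transgression kernel $\Cl(K)^G_{\trans}$. The only small thing to double-check is that the inclusion $\epsilon_{K/F}(\Cl(F))\subseteq \Cl(K)^G_{\trans}$ used in reading off $\Coker(\lambda)$ from \eqref{equation, Aviles exact sequence} is consistent with Remark \ref{remark, if L/K is Galois then epsilon of Cl(K) is contained in relative Polya group}, which it is since $\epsilon_{K/F}(\Cl(F))\subseteq \Po(K/F)=\Cl(K)^G_{\trans}$.
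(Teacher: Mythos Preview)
Your proof is correct and follows essentially the same approach as the paper: both use the Brumer--Rosen fact that $I(K)^G$ is generated by the relative Ostrowski ideals to identify $\Po(K/F)$ with $I(K)^G/P(K)^G$, and then read off the two claims from Proposition~\ref{proposition, Aviles results}. The only cosmetic difference is that the paper phrases the second step as a comparison of \eqref{equation, BRZ exact sequence} with \eqref{equation, Aviles exact sequence}, whereas you work entirely inside Proposition~\ref{proposition, Aviles results} (first sequence for the identification, then \eqref{equation, Aviles exact sequence} for the cokernel); your version is slightly more self-contained.
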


\begin{proof}
	Since  $I(K)^G$ is generated by all the relative Ostrowski ideals $\Pi_{\mathfrak{p}^f}(K/F)$, see \cite[proof of Proposition 2.2]{Brumer-Rosen}, $I(K)^G/P(K)^G$ coincides with the relative P\'olya group $\Po(K/F)$. Using \eqref{equation, BRZ exact sequence} and Proposition \ref{proposition, Aviles results}, we conclude that $\Ost(K/F)$ is nothing else than the cokernel of the localization map $\lambda$.
\end{proof}

	\section{On Tate-Shafarevich groups} \label{section, Tate-Shafarevich group}
	
	Keep the same notations introduced in the previous section.
	Let $E/F$ be an elliptic curve defined over $F$. By the \textit{Mordell-Weil Theorem} the group of $F$-rational points of $E$, denoted by $E(F)$, is finitely generated. An important tool to study $E(F)$, is the \textit{Tate-Shafarevich group} $\Sha(E/F)$, which measures the failure of the \textit{Hasse local-global principle} for 
	curves that are \textit{principal homogeneous spaces} for $E/F$. More formally, 
	$\Sha(E/F)$ is defined to be the group of everywhere locally trivial elements of $H^1(G_F,E(\overline{F}))$ \cite[Chapter X]{Silverman}:
	\begin{equation} \label{equation, Tate-Shafarevich group of elliptic curve}
		\Sha(E/F):=\Ker\left(H^1(G_F,E(\overline{F})) \rightarrow \bigoplus_{v \in M_F} H^1(G_{F_v},E(\overline{F_v}) \right).
	\end{equation}
	
	The famous conjecture of Tate and Shafarevich states that $\Sha(E/K)$ is finite \cite[Conjecture 4.13]{Silverman}. Though it has been shown that if the analytic rank of $E$ is smaller that $2$, then $\Sha(E/K)$ is finite \cite[Theorem 5.12]{Rubin}, but the general case has not yet been proved.
Using the cohomological analogy between the Mordell-Weil group $E(F)$ and the unit group $U_F$,  Tate-Shafarevich group of $F$ is defined similarly. 
	
	

	\begin{definition} \cite[$\S$1]{Schoof} \label{definition, Tate-Shafarevich group of number fields}
		The ``Tate-Shafarevich group $\Sha(F)$'' of $F$ is defined as 
		\begin{equation} \label{equation, Tate-Shafarevich group of K}
			\Sha(F):= \Ker\left(H^1(G_F,U_{\overline{F}}) \rightarrow \bigoplus_{v \in M_F} H^1(G_{\overline{F}_v},U_{\overline{F}_v}) \right),
		\end{equation}
		where for non-archimedean $v$ (resp. archimedean $v$), $U_{\overline{F}_v}$ denotes the valuation ring in $\overline{F}_v$ (resp. $U_{\overline{F}_v}=\overline{F}_v^{\times}$).
		Note that by Hilbert's Theorem 90,  $\Sha(F)$ can be defined just in term of the non-archimedean places of $F$.
	\end{definition}

	Despite Tate-Shafarevich conjecture for elliptic curves is still open, the situation for number fields is more understandable.

	\begin{proposition} \cite[Proposition 1]{Schoof} \label{proposition, the Tate-Shafarevich group of K is isomorphic to the ideal class group of K}
		$\Sha(F)$ is canonically isomorphic to the ideal class group of $F$. In particualr, $\Sha(F)$ is finite.
	\end{proposition}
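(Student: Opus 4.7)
The plan is to show simultaneously that $H^1(G_F, U_{\overline{F}}) \simeq \Cl(F)$ and that the defining localization map vanishes on this whole group; the asserted identification $\Sha(F) \simeq \Cl(F)$ is then immediate from the definition of $\Sha(F)$ as the kernel of that map.

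The starting point is the short exact sequence of $G_F$-modules
\begin{equation*}
1 \longrightarrow U_{\overline{F}} \longrightarrow \overline{F}^{\times} \xrightarrow{\mathrm{div}} D_{\overline{F}} \longrightarrow 0,
\end{equation*}
where $D_{\overline{F}}$ denotes the group of (finite) divisors of $\overline{F}$; surjectivity of $\mathrm{div}$ encodes the fact that every fractional ideal of the ring of all algebraic integers is principal. Passing to $G_F$-cohomology and invoking Hilbert 90 in the form $H^1(G_F, \overline{F}^{\times}) = 0$ yields
\begin{equation*}
0 \longrightarrow U_F \longrightarrow F^{\times} \longrightarrow I(F) \longrightarrow H^1(G_F, U_{\overline{F}}) \longrightarrow 0,
\end{equation*}
after identifying $(D_{\overline{F}})^{G_F}$ with $I(F)$. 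Hence $H^1(G_F, U_{\overline{F}}) \simeq I(F)/P(F) = \Cl(F)$, and under this isomorphism the class of a fractional ideal $I \in I(F)$ is represented by the cocycle $\sigma \mapsto \alpha^{\sigma}/\alpha$, where $\alpha \in \overline{F}^{\times}$ is any element with $(\alpha) = I\,\mathcal{O}_{\overline{F}}$.

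For the local vanishing, fix a finite place $v$ of $F$. The ideal $I\mathcal{O}_{F_v}$ is automatically principal in the discrete valuation ring $\mathcal{O}_{F_v}$, say $I\mathcal{O}_{F_v} = (\beta)$ with $\beta \in F_v^{\times}$; since $(\alpha) = (\beta)$ in $\overline{F_v}$, the ratio $\alpha/\beta$ lies in $U_{\overline{F_v}}$, and the restriction of $\sigma \mapsto \alpha^{\sigma}/\alpha$ to $G_{F_v}$ is precisely the coboundary of $\alpha/\beta$. For archimedean $v$, $U_{\overline{F_v}} = \overline{F_v}^{\times}$ and the local group already vanishes by Hilbert 90. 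Consequently the global-to-local map on $H^1(G_F, U_{\overline{F}})$ is identically zero, so
\begin{equation*}
\Sha(F) = H^1(G_F, U_{\overline{F}}) \simeq \Cl(F),
\end{equation*}
and finiteness of $\Sha(F)$ is inherited from the classical finiteness of the ideal class group.

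The main obstacle is the careful identification $(D_{\overline{F}})^{G_F} \simeq I(F)$: this hides a bit of bookkeeping about how $G_F$ permutes the places of $\overline{F}$ above each $v \in M_F^0$ and how valuations are to be normalized in infinite extensions. Once that identification is set up correctly, the rest is a clean diagram chase powered by Hilbert 90 and the triviality of $\Cl(F_v)$ for local fields.
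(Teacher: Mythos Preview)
The paper does not supply its own proof of this proposition; it simply cites \cite[Proposition 1]{Schoof}. Your argument is correct and is essentially the standard one (and, in outline, the one given in Schoof--Washington): realize $H^1(G_F,U_{\overline F})$ as the cokernel of $F^\times\to I(F)$ via the sequence $1\to U_{\overline F}\to\overline F^{\,\times}\to D_{\overline F}\to 0$ and Hilbert~90, then kill the local images using that $\mathcal O_{F_v}$ is a PID. The only point you flag yourself---the identification $(D_{\overline F})^{G_F}\simeq I(F)$ and the surjectivity of $\mathrm{div}$ (i.e.\ principality of ideals in $\mathcal O_{\overline F}$, which follows from finiteness of class groups in finite subextensions)---is indeed the only thing requiring a moment's care, and you have handled it appropriately.
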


 Replacing $\overline{F}$ with a finite Galois extension of $F$, one obtains the following notion.

	\begin{definition} \cite[$\S$1]{Schoof}
		Let $K/F$ be a finite Galois extension of number fields with Galois group $G$. The \textit{locally trivial cohomology group} $H^1_{lt}(G,U_K)$ is defined as 
		\begin{equation} \label{equation, definition of H^1_lt}
			H^1_{lt}(G,U_K):=\Ker \left(H^1(G,U_K) \rightarrow \bigoplus_{v \in M_F} H^1(G_{w},U_{K_{w}})\right),
		\end{equation}
	where for each $v \in M_F$, $w$ is a fixed place of $K$ lying above $v$.
	\end{definition}

	\begin{proposition} \cite[Corollary of Proposition 1]{Schoof} \label{proposition, Schoof-Washington result that Ker(epsilon) is isomorphic to H^1_lt}
		Let $K/F$ be a finite Galois extension of number fields with Galois group $G$. Then
		\begin{equation} \label{equation, isomorphism between kernel of epsilon and H^1_lt}
			\Ker(\epsilon_{K/F}) \simeq  H^1_{lt}(G,U_K),
		\end{equation}
	where $\epsilon_{K/F}:\Cl(F) \rightarrow \Cl(K)^G$ denotes the capitulation map.
	\end{proposition}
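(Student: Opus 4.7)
The plan is to derive the isomorphism directly from the Avilés exact sequence \eqref{equation, Aviles exact sequence} together with Hilbert's Theorem 90 at the archimedean places.

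First I would read off from \eqref{equation, Aviles exact sequence} the identification
$$\Ker(\epsilon_{K/F}) \simeq \Ker(\lambda),$$
where $\lambda\colon H^1(G, U_K) \to \bigoplus_{v \in M_F^0} H^1(G_w, U_{K_w})$ is the localization map indexed only over the \emph{finite} places of $F$. Since $H^1_{lt}(G, U_K)$ is defined in \eqref{equation, definition of H^1_lt} as the kernel of the analogous localization into the direct sum over \emph{all} places of $F$, it is enough to prove that the archimedean factors $H^1(G_w, U_{K_w})$ vanish; discarding them then leaves the kernel unchanged.

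In the archimedean case $U_{K_w} = K_w^{\times}$ by the convention of Definition \ref{definition, Tate-Shafarevich group of number fields}, and the decomposition group $G_w = \Gal(K_w/F_v)$ is either trivial (both $v$ and $w$ real, or both complex) or equal to $\Gal(\mathbb{C}/\mathbb{R})$ (when $v$ is real and $w$ is complex). The trivial case is immediate, and the remaining case yields $H^1(\Gal(\mathbb{C}/\mathbb{R}), \mathbb{C}^{\times}) = 0$ by Hilbert 90. Thus the archimedean summands contribute nothing, and one concludes $H^1_{lt}(G, U_K) \simeq \Ker(\lambda) \simeq \Ker(\epsilon_{K/F})$.

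The argument contains no real obstacle once Proposition \ref{proposition, Aviles results} is in hand; the only subtlety is matching the two slightly different conventions for the localization map (finite places only versus all places), which is resolved precisely by the Hilbert 90 vanishing at archimedean places just described. A self-contained route would instead start from the short exact sequence $1 \to U_K \to K^{\times} \to P(K) \to 0$, take $G$-cohomology, and track $\Ker(\epsilon_{K/F})$ into $H^1(G, U_K)$ by inflation--restriction combined with the local description of $H^1(G_w, U_{K_w})$ at finite places; but since the Avilés sequence already packages exactly this diagram chase, the shortcut above is preferable.
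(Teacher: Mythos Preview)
Your proposal is correct and is precisely the argument the paper has in mind: the proposition is quoted from Schoof--Washington without proof, and the paper's only comment (the remark immediately following) is that the isomorphism ``is also obtained from Gonz\'alez-Avil\'es exact sequence \eqref{equation, Aviles exact sequence}'', together with the Hilbert 90 vanishing at archimedean places already recorded at the end of Definition~\ref{definition, Tate-Shafarevich group of number fields}. You have simply made that one-line remark explicit.
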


	\begin{remark}
		Note that isomorphism \eqref{equation, isomorphism between kernel of epsilon and H^1_lt} is also obtained from Gonz\'alez-Avil\'es exact sequence \eqref{equation, Aviles exact sequence}.
	\end{remark}

	\section{Relative Polya group as image of the restriction map} \label{section, Yu commutative diagram for number fields}
	As before, we use the notations introduced in Section \ref{section, Localization and transgression maps}.
	So let $K/F$ be a finite Galois extension of number fields with Galois group $G$. Replacing unit groups with rational points of elliptic curves, considered in Yu's paper \cite{Yu}, one obtains the following commutative diagram: 
	\begin{equation} \label{equation,  commutative diagram for unit groups}
		{\scriptsize
			\begin{tikzcd}
				0 \arrow[r] &	H^1(G,U_K) \arrow[r, "\Inf_{K/F}"] \arrow[d,"\overline{\mathcal{F}}"]
				& H^1(G_F,U_{\overline{F}}) \arrow[d, "\overline{\mathcal{G}}"] \arrow[r, "\Res_{K/F}"] & \Image(\Res_{K/F}) \arrow[d, "\overline{\mathcal{H}}"] \arrow[r]  & 0\\
				0 \arrow[r] &	\bigoplus_{v \in M_F} H^1(G_w,U_{K_w}) \arrow[r]
				& \bigoplus_{v \in M_F} H^1(G_{F_v},U_{\overline{F}_v}) \arrow[r] & \bigoplus_{v \in M_F} H^1(G_{K_w},U_{\overline{K}_w}),
			\end{tikzcd}
		} 
	\end{equation}
	where $\Inf_{K/F}$ denotes the inflation map and $\Res_{K/F}:H^1(G_F,U_{\overline{F}}) \rightarrow H^1(G_K,U_{\overline{K}})^G$ denotes the restriction map. 
	Using the snake lemma, we find the following exact sequence:
		\begin{equation} \label{equation, exact sequence obtained from commutative diagram for unit groups}
			0 \rightarrow    \Ker(\overline{\mathcal{F}}) \rightarrow \Ker(\overline{\mathcal{G}}) \rightarrow \Ker(\overline{\mathcal{H}}) \rightarrow \Coker(\overline{\mathcal{F}}) \rightarrow \overline{\mathcal{I}}\left(\Coker(\overline{\mathcal{F}})\right) \rightarrow 0,
	\end{equation}
	where 
	\begin{equation} \label{equation, map I-bar in commutative diagram for unit groups}
		\overline{\mathcal{I}}:\Coker(\overline{\mathcal{F}}) \rightarrow \Coker(\overline{\mathcal{G}})
	\end{equation}
	is the map obtained from diagram \eqref{equation,  commutative diagram for unit groups}. By Propositions \ref{proposition, Schoof-Washington result that Ker(epsilon) is isomorphic to H^1_lt} and \ref{proposition, the Tate-Shafarevich group of K is isomorphic to the ideal class group of K} we have $ \Ker(\overline{\mathcal{F}}) \simeq \Ker(\epsilon_{K/F})$ and $\Ker(\overline{\mathcal{G}}) \simeq \Cl(F)$, respectively. Hence \eqref{equation, exact sequence obtained from commutative diagram for unit groups} is equivalent to the following exact sequence:
		\begin{equation} \label{equation, exact sequence obtained from commutative diagram for unit groups in term of class group}
			0 \rightarrow  \Ker(\epsilon_{K/F})   \rightarrow \Cl(F) \rightarrow \Ker(\overline{\mathcal{H}}) \rightarrow \Coker(\overline{\mathcal{F}}) \rightarrow \overline{\mathcal{I}}\left(\Coker(\overline{\mathcal{F}})\right) \rightarrow 0.
	\end{equation}

	\begin{lemma} \label{lemma, Ker(g bar)=Cl(F), Ker(F bar)=Ker(epsilon) and Ker(H bar)=Po(K/F)}
		With the notations of this section, one has
			$\Ker(\overline{\mathcal{H}}) \simeq \Po(K/F)$.
	\end{lemma}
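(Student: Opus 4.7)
\emph{Plan.} The approach is to identify the terms in the snake lemma exact sequence \eqref{equation, exact sequence obtained from commutative diagram for unit groups in term of class group}. By Propositions \ref{proposition, Schoof-Washington result that Ker(epsilon) is isomorphic to H^1_lt} and \ref{proposition, the Tate-Shafarevich group of K is isomorphic to the ideal class group of K}, $\Ker(\overline{\mathcal{F}}) \simeq \Ker(\epsilon_{K/F})$ and $\Ker(\overline{\mathcal{G}}) \simeq \Cl(F)$, while Corollary \ref{corollary, Po(L/K) coincides with Cl(L)_trans} supplies $\Coker(\overline{\mathcal{F}}) \simeq \Ost(K/F)$. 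Substituting these into \eqref{equation, exact sequence obtained from commutative diagram for unit groups in term of class group} rewrites it as
\begin{equation*}
0 \to \Ker(\epsilon_{K/F}) \to \Cl(F) \to \Ker(\overline{\mathcal{H}}) \to \Ost(K/F) \to \overline{\mathcal{I}}(\Ost(K/F)) \to 0.
\end{equation*}

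The crucial step is to show that $\overline{\mathcal{I}}$ vanishes. Once this is established, the six-term sequence collapses to
\begin{equation*}
0 \to \Ker(\epsilon_{K/F}) \to \Cl(F) \to \Ker(\overline{\mathcal{H}}) \to \Ost(K/F) \to 0,
\end{equation*}
exhibiting $\Ker(\overline{\mathcal{H}})$ as an extension of $\Ost(K/F) = \Po(K/F)/\epsilon_{K/F}(\Cl(F))$ by $\epsilon_{K/F}(\Cl(F))$. Matching against the analogous extension $0 \to \epsilon_{K/F}(\Cl(F)) \to \Po(K/F) \to \Ost(K/F) \to 0$ from Remark \ref{remark, if L/K is Galois then epsilon of Cl(K) is contained in relative Polya group} then yields $\Ker(\overline{\mathcal{H}}) \simeq \Po(K/F)$.

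For the vanishing $\overline{\mathcal{I}} = 0$, I would prove the stronger statement $\Coker(\overline{\mathcal{G}}) = 0$. Expressing $H^1(G_F, U_{\overline{F}}) = \varinjlim_{F'} H^1(\Gal(F'/F), U_{F'})$ over finite Galois subextensions $F'/F$ of $\overline{F}/F$, and similarly for the localization side, the map $\overline{\mathcal{G}}$ is the direct limit of the Avil\'es localization maps $\lambda_{F'/F}$ appearing in \eqref{equation, Aviles exact sequence}. Since direct limits commute with cokernels and $\Coker(\lambda_{F'/F}) \simeq \Ost(F'/F)$ by Corollary \ref{corollary, Po(L/K) coincides with Cl(L)_trans}, it suffices to check that $\varinjlim_{F'} \Ost(F'/F) = 0$. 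This holds because every class $[I] \in \Po(F'/F) \subseteq \Cl(F')$ capitulates in the Hilbert class field of $F'$, so its image in $\Po(F''/F)$ (and hence in $\Ost(F''/F)$) becomes trivial for any sufficiently large $F''$.

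The main obstacle is carefully verifying the direct-limit compatibility of the Avil\'es exact sequences under the inflation transition maps, which is routine bookkeeping but essential for the colimit argument to succeed; a secondary subtlety is checking that the extension structure on $\Ker(\overline{\mathcal{H}})$ matches that on $\Po(K/F)$ canonically rather than merely abstractly.
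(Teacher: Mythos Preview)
Your approach differs substantially from the paper's, and the final step does not close.

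The paper argues directly: by Hochschild--Serre one has $\Image(\Res_{K/F})=\Ker(\Trans_{K/F})$, so under the identification $H^{1}(G_{K},U_{\overline{K}})\simeq \Cl(K)$ of Proposition~\ref{proposition, the Tate-Shafarevich group of K is isomorphic to the ideal class group of K} one gets $\Ker(\overline{\mathcal{H}})=\Ker(\Trans_{K/F})\cap\Cl(K)^{G}=\Cl(K)^{G}_{\trans}$, which is $\Po(K/F)$ by Corollary~\ref{corollary, Po(L/K) coincides with Cl(L)_trans}. Only \emph{after} this does the paper deduce $\overline{\mathcal{I}}=0$ (Theorem~\ref{theorem, the map I is zero for number field case}), using the lemma just proved.

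You try to reverse this order: first establish $\overline{\mathcal{I}}=0$ independently (via the stronger claim $\Coker(\overline{\mathcal{G}})=0$, by a colimit-of-$\Ost(F'/F)$ argument), and then read off $\Ker(\overline{\mathcal{H}})$ from the resulting four-term sequence. The colimit argument is plausible, but the concluding ``extension matching'' step is a genuine gap. Knowing that $\Ker(\overline{\mathcal{H}})$ and $\Po(K/F)$ are each extensions of $\Ost(K/F)$ by $\epsilon_{K/F}(\Cl(F))$ does \emph{not} force them to be isomorphic: non-isomorphic extensions of the same pair of groups exist, and even inside $\Cl(K)^{G}$ there can be several subgroups containing $\epsilon_{K/F}(\Cl(F))$ with quotient of the prescribed isomorphism type. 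To repair this you would have to show that the snake-lemma map $\Ker(\overline{\mathcal{H}})\to\Coker(\overline{\mathcal{F}})\simeq\Ost(K/F)$ agrees, under the embedding $\Ker(\overline{\mathcal{H}})\hookrightarrow\Cl(K)^{G}$, with the canonical projection modulo $\epsilon_{K/F}(\Cl(F))$; but verifying that compatibility amounts to identifying $\Ker(\overline{\mathcal{H}})$ inside $\Cl(K)^{G}$ directly --- which is precisely the paper's two-line transgression argument. Your route is thus longer and, as written, incomplete.
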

	
	\begin{proof}
		In \cite[$\S$4, Theorem 2]{Serre} it is proved that $\Image(\Res_{K/F})=\Ker(\Trans_{K/F})$, where 
		$\Trans_{K/F}:H^1(G_K,U_{\overline{K}})^G \rightarrow H^2(G,U_K)$ denotes the transgression map. Hence by Proposition \ref{proposition, the Tate-Shafarevich group of K is isomorphic to the ideal class group of K}, we have
		\begin{equation*}
		\Ker(\overline{\mathcal{H}})=\Ker(\Trans_{K/F}) \cap \Cl(K)^G=\Ker(\trans_{K/F})=\Cl(K)_{\trans}^G,
		\end{equation*}
	where $\Cl(K)_{\trans}^G$ denotes the group of transgressive ambiguous classes, see Remark \ref{remark, transgression map in Aviles paper}. Corollary \ref{corollary, Po(L/K) coincides with Cl(L)_trans} completes the proof.
	\end{proof}

	\begin{theorem} \label{theorem, the map I is zero for number field case}
		Let $K/F$ a finite Galois extension of number fields with Galois group $G$. For the localization maps
		\begin{equation*}
			\overline{\mathcal{F}} :H^1(G,U_K) \rightarrow \bigoplus_{v \in M_F} H^1(G_w,U_{K_w})
		\end{equation*}
		and
		\begin{equation*}
			\overline{\mathcal{G}}: H^1(G_F,U_{\overline{F}}) \rightarrow  \bigoplus_{v \in M_F} H^1(G_{F_v},U_{\overline{F}_v}),
		\end{equation*}
		the map
		\begin{equation*}
			\overline{\mathcal{I}}:\Coker(\overline{\mathcal{F}}) \rightarrow \Coker(\overline{\mathcal{G}})
		\end{equation*} 
		obtaining from diagram \eqref{equation,  commutative diagram for unit groups},
		is the zero map.
	\end{theorem}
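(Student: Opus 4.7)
The plan is to deduce $\overline{\mathcal{I}}=0$ purely formally from the snake-lemma sequence \eqref{equation, exact sequence obtained from commutative diagram for unit groups in term of class group}, after substituting in the identifications
\[
\Ker(\overline{\mathcal{F}}) \simeq \Ker(\epsilon_{K/F}), \quad \Ker(\overline{\mathcal{G}}) \simeq \Cl(F), \quad \Ker(\overline{\mathcal{H}}) \simeq \Po(K/F), \quad \Coker(\overline{\mathcal{F}}) \simeq \Ost(K/F),
\]
supplied respectively by Propositions \ref{proposition, Schoof-Washington result that Ker(epsilon) is isomorphic to H^1_lt} and \ref{proposition, the Tate-Shafarevich group of K is isomorphic to the ideal class group of K}, Lemma \ref{lemma, Ker(g bar)=Cl(F), Ker(F bar)=Ker(epsilon) and Ker(H bar)=Po(K/F)}, and Corollary \ref{corollary, Po(L/K) coincides with Cl(L)_trans}. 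With these substitutions the snake sequence reads
\[
0 \to \Ker(\epsilon_{K/F}) \to \Cl(F) \xrightarrow{\phi} \Po(K/F) \xrightarrow{\delta} \Ost(K/F) \xrightarrow{\overline{\mathcal{I}}} \Coker(\overline{\mathcal{G}}),
\]
and the entire problem reduces to pinning down the snake-connecting map $\phi$.

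The key claim is that $\phi$ coincides with the capitulation map $\epsilon_{K/F}:\Cl(F)\to\Po(K/F)$; note that its image is automatically contained in $\Po(K/F)\subseteq\Cl(K)^G$ for $K/F$ Galois by Remark \ref{remark, if L/K is Galois then epsilon of Cl(K) is contained in relative Polya group}. By construction $\phi$ is the restriction of $\Res_{K/F}:H^1(G_F,U_{\overline{F}})\to H^1(G_K,U_{\overline{K}})^G$ to $\Sha(F)=\Ker(\overline{\mathcal{G}})$, so I would establish the claim by checking that Schoof's isomorphism $\Sha(F)\simeq\Cl(F)$ (which arises from the exact sequence $0\to U_{\overline{F}}\to\overline{F}^{\times}\to\overline{F}^{\times}/U_{\overline{F}}\to 0$ and sends a fractional ideal class $[\mathfrak{a}]$ to the cocycle $\sigma\mapsto\sigma(\alpha)/\alpha$ for a generator $\alpha\in\overline{F}^{\times}$ of $\mathfrak{a}\mathcal{O}_{\overline{F}}$) is natural in the base field and intertwines $\Res_{K/F}$ with the ideal-extension map $\mathfrak{a}\mapsto\mathfrak{a}\mathcal{O}_K$. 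The outcome is $\Image(\phi)=\epsilon_{K/F}(\Cl(F))$.

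Granting this, exactness of the snake sequence at $\Po(K/F)$ gives $\Ker(\delta)=\Image(\phi)=\epsilon_{K/F}(\Cl(F))$, so
\[
\Image(\delta)\simeq\Po(K/F)/\epsilon_{K/F}(\Cl(F))=\Ost(K/F)=\Coker(\overline{\mathcal{F}}),
\]
that is, $\delta$ is surjective onto $\Coker(\overline{\mathcal{F}})$. Exactness at $\Coker(\overline{\mathcal{F}})$ then forces $\Ker(\overline{\mathcal{I}})=\Image(\delta)=\Coker(\overline{\mathcal{F}})$, whence $\overline{\mathcal{I}}=0$. The main obstacle is the naturality check in the middle paragraph: once the compatibility of Schoof's isomorphism with base change is in hand, the theorem is a one-line diagram chase, and this naturality is precisely the special feature of the number-field setting that one should not expect to transfer without modification to the elliptic-curve analogue studied in the next sections.
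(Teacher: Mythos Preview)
Your proposal is correct and follows essentially the same route as the paper: both arguments feed the identifications $\Ker(\overline{\mathcal{H}})\simeq\Po(K/F)$ and $\Coker(\overline{\mathcal{F}})\simeq\Ost(K/F)$ into the snake sequence and conclude $\overline{\mathcal{I}}=0$ by a cardinality comparison. The only difference is that you explicitly propose to verify that the snake map $\Cl(F)\to\Po(K/F)$ agrees with $\epsilon_{K/F}$ via the naturality of Schoof's isomorphism, whereas the paper simply asserts the resulting short exact sequence $0\to\epsilon_{K/F}(\Cl(F))\to\Po(K/F)\to\Coker(\overline{\mathcal{F}})\to\overline{\mathcal{I}}(\Coker(\overline{\mathcal{F}}))\to 0$ without comment; your extra care here is warranted and, as you note, is exactly the point that fails to carry over to the elliptic setting.
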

	
	\begin{proof}
		By Lemma \ref{lemma, Ker(g bar)=Cl(F), Ker(F bar)=Ker(epsilon) and Ker(H bar)=Po(K/F)}, one can rewrite exact sequence \eqref{equation, exact sequence obtained from commutative diagram for unit groups in term of class group} as 
			\begin{equation*} \label{equation, exact sequence equivalent to the one obtained from diagram for unit groups}
				0 \rightarrow    \Ker(\epsilon_{K/F}) \rightarrow \Cl(F) \rightarrow  \Po(K/F) \rightarrow \Coker(\overline{\mathcal{F}}) \rightarrow \overline{\mathcal{I}}\left(\Coker(\overline{\mathcal{F}})\right) \rightarrow 0.
		\end{equation*}
	Equivalently, the sequence
		\begin{equation*} 
		0 \rightarrow    \epsilon_{K/F}\left(\Cl(F)\right) \rightarrow  \Po(K/F) \rightarrow \Coker(\overline{\mathcal{F}}) \rightarrow \overline{\mathcal{I}}\left(\Coker(\overline{\mathcal{F}})\right) \rightarrow 0
	\end{equation*}
is exact. By Corollary \ref{corollary, Po(L/K) coincides with Cl(L)_trans} we have $\Ost(K/F) \simeq \Coker(\overline{\mathcal{F}})$, hence $\overline{\mathcal{I}}$ must be the zero map.
	\end{proof}

\begin{remark}
Note that since $\Coker(\overline{\mathcal{F}}) \simeq \frac{\Cl(K)^G_{\trans}}{\epsilon_{K/F}(\Cl(F))}$, Theorem \ref{theorem, the map I is zero for number field case} shows that every class in $\Cl(K)^G_{\trans}$, modulo $\Image(\epsilon_{K/F})$, maps into the image of the localization map $	\overline{\mathcal{G}}$. So one may investigate to find a ``direct proof'' of Theorem \ref{theorem, the map I is zero for number field case}, which seems an interesting problem.
\end{remark}


\section{Main Results}	 \label{section, Main Results}
	
\textbf{Convention.} We keep the same notations introduced in Section \ref{section, Localization and transgression maps}. Throughout this section, $E/F$ denotes an elliptic curve defined over $F$ for which the Tate-Shafarevich groups $\Sha(E/F)$ and $\Sha(E/K)$ are finite.

	We recall that the main goal of this paper is to find elliptic curve analogue of Ostrowski quotient. To achieve this, we use two analogies between number fields and elliptic curves. The first one is the cohomological analogy between  unit groups of number fields and Mordell-Weil groups of elliptic curves. The other is the fact that the multiplicative group $\mathbb{G}_{m,F}$, i.e., the trivial one-dimensional algebraic torus over $F$, and elliptic curves (two very different \textit{algebraic groups} over $F$) do have some things in common. In particular, both of them admit \textit{N\'eron models} over $\mathcal{O}_F$ \footnote{The N\'eron model of $\mathbb{G}_{m,F}$ over $\mathcal{O}_F$ is described in \cite[Example 5, p. 291]{BLR}; also good, i.e., not too technical, references for N\'eron models of elliptic curves are \cite[VII, $\S \S$1,2,5]{Silverman} and \cite[IV, $\S$9]{Silverman2}.} (However, these models are very different from each other). 
	\begin{remark}
		Note that $\mathbb{G}_{m,F}$ is not \textit{unique} one-dimensional algebraic torus over $F$. There is another one, namely the norm one $F$-torus associated with a quadratic Galois extension $K/F$, i.e., the kernel $R_{K/F}^{(1)}\mathbb{G}_{m,K}$ of the norm map (for $F$-tori) $R_{K/F}\mathbb{G}_{m,K}\rightarrow \mathbb{G}_{m,F}$, where $R_{K,F}$ denotes Weil restriction. This $F$-torus $R_{K/F}^{(1)}\mathbb{G}_{m,K}$ has dimension $[K:F]-1=1$, but is not equal to $\mathbb{G}_{m,F}$.
	\end{remark}
	In the following, we use  Corollary \ref{corollary, Po(L/K) coincides with Cl(L)_trans} to define Ostrowski quotient of $\mathbb{G}_{m,F}$. Then using the same approach, we define \textit{Ostrowski quotient} $\Ost(E,K/F)$ for $E/F$ an elliptic curve defined over $F$. Due to some fundamental complications in this new setting, see Remark \ref{remark, complication for Ost(E,K/F)} below, we also define the notion of ``\textit{coarse Ostrowski quotient} $\Ost_c(E,K/F)$'' for which we do not take into account primes of bad reduction. Our main result, namely Theorem \ref{Main Theorem}, reveals some information about the structure of $\Ost_c(E,K/F)$. Finally, we investigate $\Ost_c(E,K/F)$ for some specific classes of elliptic curves $E$ and some extensions $K/F$.

\subsection{On elliptic curve analogue of the map $\overline{\mathcal{I}}$} As the first step, we need to investigate elliptic analog of the map $\overline{\mathcal{I}}$ as in \eqref{equation, map I-bar in commutative diagram for unit groups}. We begin with a special version of the well-known \textit{Global Duality Theorem}.

\begin{theorem}  \cite[Theorem 1]{Yu} \label{theorem, global duality theorem restated by Yu}
For $E/F$ an elliptic curve defined over $F$,  the following sequence is exact:
	\begin{equation} \label{equation, Yu exact sequence}
		0 \rightarrow \Sha(E/F) \rightarrow H^1(G_F,E) \xrightarrow{\mathcal{G}} \bigoplus_{v \in M_F} H^1(G_{F_v},E) \rightarrow \widehat{E(F)}^* \rightarrow 0,
	\end{equation}
	where $\widehat{E(F)}$ denotes the completion of $E(F)$ with respect to the topology defined by the subgroups of finite index, and $\widehat{E(F)}^*$ denotes the group of continuous characters of finite order of $\widehat{E(F)}$, i.e. $\widehat{E(F)}^*=\Hom_{\cts}(\widehat{E(F)},\mathbb{Q}/\mathbb{Z})$. 
\end{theorem}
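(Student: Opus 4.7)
The plan is to deduce this four-term exact sequence from the Cassels--Poitou--Tate global duality for the abelian variety $E/F$, using as inputs the definition of $\Sha(E/F)$ in \eqref{equation, Tate-Shafarevich group of elliptic curve}, local Tate duality at each place, and the finiteness hypothesis on $\Sha(E/F)$ stated in the Convention.

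First I would take $\mathcal{G}$ to be the product of the restriction (localisation) maps $H^1(G_F,E)\to H^1(G_{F_v},E)$ induced by fixed embeddings $\overline{F}\hookrightarrow\overline{F_v}$; exactness at $H^1(G_F,E)$, i.e.\ $\Ker(\mathcal{G})=\Sha(E/F)$, is then tautological from \eqref{equation, Tate-Shafarevich group of elliptic curve}. Next, local Tate duality for elliptic curves, using the self-duality $E\simeq E^{\vee}$, gives at each place $v$ a perfect pairing of locally compact abelian groups
$$H^1(G_{F_v},E)\times E(F_v)\longrightarrow\mathbb{Q}/\mathbb{Z},$$
which identifies $H^1(G_{F_v},E)$ with the Pontryagin dual $\widehat{E(F_v)}^{*}$. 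Summing over $v$ and pairing with the image of the diagonal $E(F)\to\prod_v E(F_v)$ then yields the candidate last arrow $\bigoplus_v H^1(G_{F_v},E)\to \widehat{E(F)}^{*}$.

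The essential content of the theorem is exactness in the middle and surjectivity on the right. I would reduce to finite coefficients via Kummer theory: the short exact sequences $0\to E[n]\to E\xrightarrow{n}E\to 0$ translate statements about $E(\overline{F})$ into statements about the finite $G_F$-modules $E[n]$, for which the full nine-term Poitou--Tate exact sequence applies. Middle exactness then follows from the self-duality portion of Poitou--Tate; one subsequently takes the inverse limit over $n$. The finiteness of $\Sha(E/F)$ is used both to ensure that the Cassels--Tate pairing on $\Sha(E/F)$ is a non-degenerate pairing of finite abelian groups and to keep control of divisible subgroups during the limit process.

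The main obstacle is precisely this limit argument, together with the book-keeping around the residual $H^0$-dual term in Poitou--Tate. The nine-term sequence for finite modules naturally terminates with a dual-$H^0$ (or, equivalently, a Cassels--Tate-type $\Sha^{*}$) contribution, and isolating a clean four-term sequence of the displayed form requires identifying which of these contributions become trivial once one specialises from $E[n]$ back to $E$ and invokes the finiteness of $\Sha(E/F)$. This is the crux of Yu's theorem, and where the bulk of the technical work lies.
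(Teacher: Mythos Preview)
The paper does not prove this statement at all: it is quoted verbatim as \cite[Theorem 1]{Yu} and used as a black box, with only a remark explaining why the dual abelian variety in Yu's formulation may be replaced by $E$ itself via $\Pic^0(E)\simeq E$. So there is no ``paper's own proof'' to compare your proposal against.

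That said, your outline is the standard route to results of this type (Cassels--Poitou--Tate for $E[n]$, local Tate duality plus self-duality of $E$, then a limit over $n$), and it is essentially how Yu's theorem is established in the literature. Two small cautions. First, your sketch leans on the finiteness of $\Sha(E/F)$ from the paper's Convention, but the statement as phrased in Yu (and as quoted here) does not require that hypothesis; the four-term sequence holds unconditionally, with the Cassels--Tate pairing only entering if one wants to identify $\Sha$ with its own dual. Second, you correctly flag the limit/bookkeeping step as the crux, but you stop short of carrying it out; what you have written is an accurate roadmap rather than a proof. If you want a self-contained argument you will need to actually pass to the limit in the nine-term Poitou--Tate sequence and check that the relevant $\varprojlim^1$ obstructions vanish, or simply cite Yu (or Milne's Arithmetic Duality Theorems, I.6) as the paper does.
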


\begin{remark}
	The last term in the original form of the exact sequence \eqref{equation, Yu exact sequence} appears based on the \textit{dual} of abelian varieties \cite[Theorem 1]{Yu}, whereas in the exceptional case the dual of an elliptic curve E will be isomorphic to $E$ itself. More percisely, in the characteristic zero, the dual of an abelian variety $A$ is isomorphic to the ``degree zero divisor group $\Pic^0(A)$'' of $A$ \cite[$\S$8]{Mumford}, and for an elliptic curves $E$ one has $\Pic^0(E) \simeq E$
	\cite[Chapter III, Proposition 3.4]{Silverman}.
\end{remark}

Similar to number field case, as discussed in Section \ref{section, Yu commutative diagram for number fields}, and following Yu \cite[$\S$2]{Yu}, consider the commutative diagram
\begin{equation} \label{equation, Yu commutative diagram}
	{\scriptsize
		\begin{tikzcd}
			0 \arrow[r] &	H^1(G,E(K)) \arrow[r, "\Inf_{K/F}^E"] \arrow[d,"\mathcal{F}"]
			& H^1(G_F,E) \arrow[d, "\mathcal{G}"] \arrow[r, "\Res^E_{K/F}"] & \Image(\Res^E_{K/F}) \arrow[d, "\mathcal{H}"] \arrow[r]  & 0\\
			0 \arrow[r] &	\bigoplus_{v \in M_F} H^1(G_{w},E(K_w) \arrow[r, "\eta"]
			& \bigoplus_{v \in M_F} H^1(G_{F_v},E) \arrow[r, "\theta"] & \bigoplus_{v \in M_F} H^1(G_{K_{w}},E),
		\end{tikzcd}
	}
\end{equation}
where $\Inf_{K/F}^E$ denotes the inflation map and 
\begin{equation} \label{equation, restriction map for E/K and E/L}
	\Res_{K/F}^E:H^1(G_F,E) \rightarrow H^1(G_K,E)^G
\end{equation}
denotes the restriction map. Using the snake lemma, we find the sequence
\begin{equation} \label{equation, exact sequence obtained from Yu commutative diagram}
	0 \rightarrow    \Ker(\mathcal{F}) \rightarrow \Sha(E/F) \xrightarrow{\widetilde{\Res}_{K/F}^E}  \Ker(\mathcal{H}) \rightarrow \Coker(\mathcal{F}) \rightarrow \mathcal{I}\left(\Coker(\mathcal{F})\right) \rightarrow 0,
\end{equation}
is exact, where 
\begin{equation*} 
	\widetilde{\Res}_{K/F}^E:=\Res_{K/F}^E\mid_{\Sha(E/F)}: \Sha(E/F) \rightarrow H^1(G_K,E)^G
\end{equation*}
and
\begin{equation} \label{equation, map I in Yu paper}
	\mathcal{I}:\Coker(\mathcal{F}) \rightarrow \Coker(\mathcal{G})
\end{equation}
is the map obtained from the above diagram. Let
\begin{equation} \label{equation, transgression map in Yu papaer}
	\Trans^E_{K/F}: H^1(G_K,E)^G \rightarrow H^2(G,E(K))
\end{equation}
be the transgression map, see Remark \ref{remark, transgression map in Aviles paper}. From \cite[$\S$4, Theorem 2]{Serre} we have
	\begin{equation} \label{equation, Image(ResK/FE)=Ker(TransK/FE)}
		\Image\left(\Res_{K/F}^E\right)=\Ker\left(\Trans^E_{K/F}\right).
	\end{equation}


\begin{lemma} \label{lemma, isomorphism between Ker(epsilon-tilda) and Ker(F)}
	With the notations of this section, 
	\begin{equation} \label{equation, image of res-tilda is contained in Sha(E/L)^G}
		\Image(\widetilde{\Res}_{K/F}^E) \subseteq \Sha(E/K)^G,
	\end{equation}
	and
	\begin{equation} \label{equation, Ker(epsilon-tilda) and Ker(F) are isomorphic}
		\Ker(\widetilde{\Res}_{K/F}^E) \simeq \Ker(\mathcal{F}).
	\end{equation}
\end{lemma}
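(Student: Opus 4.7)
The strategy is to chase the commutative diagram \eqref{equation, Yu commutative diagram} together with a ``refined'' version of its bottom row obtained by further restricting from $G_{F_v}$ to $G_{K_{w'}}$ for each place $w'$ of $K$. For the first inclusion $\Image(\widetilde{\Res}_{K/F}^E) \subseteq \Sha(E/K)^G$, the key point is that for any place $w'$ of $K$ lying above $v \in M_F$, the composite
\[
H^1(G_F,E) \xrightarrow{\Res^E_{K/F}} H^1(G_K,E) \longrightarrow H^1(G_{K_{w'}},E)
\]
agrees with the composite obtained by first localizing $H^1(G_F,E) \to H^1(G_{F_v},E)$ and then restricting to $G_{K_{w'}}$. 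Hence for any $x \in \Sha(E/F)$, whose local image in $H^1(G_{F_v},E)$ is already zero, the further image in $H^1(G_{K_{w'}},E)$ must vanish as well. Doing this for every place $w'$ of $K$ yields the desired inclusion.

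For the second statement $\Ker(\widetilde{\Res}_{K/F}^E) \simeq \Ker(\mathcal{F})$, I would invoke the top row of \eqref{equation, Yu commutative diagram}, which is the inflation-restriction exact sequence and in particular gives $\Ker(\Res^E_{K/F}) = \Image(\Inf^E_{K/F})$ with $\Inf^E_{K/F}$ injective. Thus $\Ker(\widetilde{\Res}_{K/F}^E)$ consists precisely of those classes of the form $\Inf^E_{K/F}(c)$ that additionally lie in $\Sha(E/F) = \Ker(\mathcal{G})$. Using commutativity of the left square, the condition $\mathcal{G}(\Inf^E_{K/F}(c))=0$ becomes $\eta(\mathcal{F}(c))=0$. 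Since $\eta$ is the direct sum over $v \in M_F$ of local inflation maps, each of which is injective by the local inflation-restriction sequence at $v$, this is equivalent to $\mathcal{F}(c)=0$. Therefore $\Ker(\widetilde{\Res}_{K/F}^E) = \Inf^E_{K/F}(\Ker(\mathcal{F}))$, and injectivity of $\Inf^E_{K/F}$ yields the claimed isomorphism.

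The main subtle point I expect to address is a compatibility issue regarding choices of places: only one place $w$ above each $v$ is fixed in the definition of $\mathcal{F}$ and of the bottom row, yet $\Sha(E/K)^G$ requires vanishing of \emph{all} local components. This is resolved by the fact that the class $\Res^E_{K/F}(x)$ lies in $H^1(G_K,E)^G$, so its local components at the various places $w'$ above $v$ are $G$-conjugate to each other; hence vanishing at one of them implies vanishing at all, and the argument of the first paragraph applies uniformly. With this observation in hand, both assertions reduce to routine diagram chasing using the injectivity of the local inflation maps and of $\Inf^E_{K/F}$.
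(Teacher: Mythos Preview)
Your proof is correct. Both statements are, in the paper, immediate consequences of the snake-lemma exact sequence \eqref{equation, exact sequence obtained from Yu commutative diagram} already recorded before the lemma: the first three terms $0 \to \Ker(\mathcal{F}) \to \Sha(E/F) \xrightarrow{\widetilde{\Res}_{K/F}^E} \Ker(\mathcal{H})$ give the kernel isomorphism directly, and the inclusion then follows from the identification $\Ker(\mathcal{H}) = \Sha(E/K)^G \cap \Ker(\Trans^E_{K/F})$ taken from \cite[p.~213]{Yu}. Your argument is a more hands-on diagram chase that bypasses both the snake lemma and Yu's description of $\Ker(\mathcal{H})$: for \eqref{equation, image of res-tilda is contained in Sha(E/L)^G} you verify local triviality at every place of $K$ directly via the factorization through $H^1(G_{F_v},E)$, and for \eqref{equation, Ker(epsilon-tilda) and Ker(F) are isomorphic} you unwind the inflation--restriction sequence together with the injectivity of $\eta$. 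The paper's route is shorter because it packages everything into one appeal to the snake lemma; your route is more self-contained and, in particular, avoids invoking the transgression map at this stage. The remark about $G$-conjugacy of places is not strictly needed for \eqref{equation, image of res-tilda is contained in Sha(E/L)^G} since your first paragraph already treats an arbitrary place $w'$ of $K$, but it does no harm.
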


\begin{proof}
		 Using  exact sequence \eqref{equation, exact sequence obtained from Yu commutative diagram}, equation \eqref{equation, Image(ResK/FE)=Ker(TransK/FE)} and  equality
	\begin{equation} \label{equation, kernel of H}
		\Ker(\mathcal{H}) = \Sha(E/K)^G \cap \Ker(\Trans^E_{K/F}),
	\end{equation}
	see \cite[page 213]{Yu}, we obtain the containment \eqref{equation, image of res-tilda is contained in Sha(E/L)^G}. Eexact sequence  \eqref{equation, exact sequence obtained from Yu commutative diagram} also implies that
	\begin{equation*}
		\Ker(\mathcal{F}) \simeq \Ker(\widetilde{\Res}_{K/F}^E). 
	\end{equation*} 
\end{proof}

\begin{remark}
	By Lemma \eqref{lemma, isomorphism between Ker(epsilon-tilda) and Ker(F)}, the map $\widetilde{Res}_{K/F}^E$ can be written as
	\begin{equation} \label{equation, res-tilda}
		\widetilde{\Res}_{K/F}^E: \Sha(E/F) \rightarrow \Sha(E/K)^G,
	\end{equation}
	which is the analogue of the capitulation map $\epsilon_{K/F}:\Cl(F) \rightarrow \Cl(K)^G$ as in \cite[$\S$2]{Aviles}. Moreover, the isomorphism \eqref{equation, Ker(epsilon-tilda) and Ker(F) are isomorphic},
	may be thought as the analogue of Schoof-Washington result in Proposition \ref{proposition, Schoof-Washington result that Ker(epsilon) is isomorphic to H^1_lt}.
\end{remark}

Similar to  the transgression map $\trans_{K/F}$ mentioned in Remark \ref{remark, transgression map in Aviles paper}, let
\begin{equation} \label{equation, definition of trans-tilda for Tate-Shafarevich groups}
	\trans^E_{K/F}:=\Trans^E_{K/F}|_{\Sha(E/K)^G}:\Sha(E/K)^G \rightarrow H^2(G,E(K)).
\end{equation}
By equation \eqref{equation, kernel of H}, we have
\begin{equation} \label{equation, ker(H) is equal to ker(trans-tilda)}
	\Ker(\mathcal{H})=\Ker(\trans^E_{K/F}),
\end{equation}
and exact sequence \eqref{equation, exact sequence obtained from Yu commutative diagram} yields
\begin{equation} \label{equation, image of res-tilda is contained into ker(trans-tilda)}
	\Image(\widetilde{\Res}^E_{K/F}) \subseteq \Ker(\trans^E_{K/F}).
\end{equation}

\begin{theorem} \label{theorem, EC-BRZI}
Let $E/F$ be an elliptic curve defined over $F$ and $K/F$  a finite Galois extension of number fields with Galois group $G$. Then the following statements are equivalent.

\begin{itemize}
	\item[(i)] The sequence 
	{\small
		\begin{equation} \label{equation, EC-BRZI} 
			0 \rightarrow    \Ker\left(\widetilde{\Res}_{K/F}^E\right) \rightarrow  H^1\left(G,E(K)\right)   \xrightarrow{\mathcal{F}}  \bigoplus_{v \in M_F} H^1\left(G_w,E(K_w)\right) \rightarrow \frac{\Ker(\trans^E_{K/F})}{\Image\left(\widetilde{\Res}_{K/F}\right)} \rightarrow  0
	\end{equation}}
is exact.
	\item[(ii)] 
	The map $\mathcal{I}:\Coker(\mathcal{F}) \rightarrow \Coker(\mathcal{G})$	as in \eqref{equation, map I in Yu paper}, is the zero map.
	
	\item[(iii)] The localization map 
	\begin{equation} \label{equation, map F0prime for elliptic curves}
		\mathcal{F}_0:\widehat{H}^0(G,E(K)) \rightarrow \prod_{v \in M_F} \widehat{H}^0\left(G_w,E(K_w)\right)
	\end{equation}
	is the zero map. 
\end{itemize}
In particular, if $\widehat{E(F)}$, introduced in Theorem \ref{theorem, global duality theorem restated by Yu}, is trivial
then \eqref{equation, EC-BRZI} is exact.
\end{theorem}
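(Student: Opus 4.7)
The plan is to deduce (i) $\Leftrightarrow$ (ii) as a direct reading of the snake lemma sequence \eqref{equation, exact sequence obtained from Yu commutative diagram}, to establish (ii) $\Leftrightarrow$ (iii) via Tate local duality combined with Yu's Theorem \ref{theorem, global duality theorem restated by Yu}, and to deduce the ``in particular'' assertion as an immediate consequence.

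For (i) $\Leftrightarrow$ (ii), I would rewrite the snake lemma sequence \eqref{equation, exact sequence obtained from Yu commutative diagram} using the identification $\Ker(\mathcal{F}) \simeq \Ker(\widetilde{\Res}^E_{K/F})$ from Lemma \ref{lemma, isomorphism between Ker(epsilon-tilda) and Ker(F)} together with $\Ker(\mathcal{H}) = \Ker(\trans^E_{K/F})$ from \eqref{equation, ker(H) is equal to ker(trans-tilda)}. Splitting the resulting six-term sequence at $\Coker(\mathcal{F})$ produces the short exact sequence
\[
0 \to \Ker(\trans^E_{K/F})/\Image(\widetilde{\Res}^E_{K/F}) \to \Coker(\mathcal{F}) \to \mathcal{I}\bigl(\Coker(\mathcal{F})\bigr) \to 0.
\]
The connecting map from $\bigoplus_{v} H^1(G_w, E(K_w))$ to $\Ker(\trans^E_{K/F})/\Image(\widetilde{\Res}^E_{K/F})$ in \eqref{equation, EC-BRZI} factors through $\Coker(\mathcal{F})$, and this factorization is an isomorphism precisely when $\mathcal{I}(\Coker(\mathcal{F})) = 0$, i.e., when $\mathcal{I}$ is the zero map. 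Exactness of \eqref{equation, EC-BRZI} at the remaining spots is automatic from $\Ker(\mathcal{F}) \simeq \Ker(\widetilde{\Res}^E_{K/F})$ and the very definition of $\Coker(\mathcal{F})$.

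For (ii) $\Leftrightarrow$ (iii), the plan is to invoke Tate local duality, which provides for each non-archimedean place $v$ a perfect Pontryagin pairing $\widehat{H}^0(G_w, E(K_w)) \times H^1(G_w, E(K_w)) \to \mathbb{Q}/\mathbb{Z}$. Globalising via sum--product Pontryagin duality identifies $\prod_{v} \widehat{H}^0(G_w, E(K_w))$ with the Pontryagin dual of $\bigoplus_{v} H^1(G_w, E(K_w))$. Combining this with Yu's Theorem \ref{theorem, global duality theorem restated by Yu}, which yields $\Coker(\mathcal{G}) \simeq \widehat{E(F)}^*$, one chases a diagram comparing the Yu-type exact sequences in degrees $0$ and $1$ in order to identify $\mathcal{I}$ (up to the canonical duality isomorphisms) with the Pontryagin dual of $\mathcal{F}_0$; vanishing of either map is then equivalent to vanishing of the other. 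This duality identification is where I expect the main technical obstacle, since it requires tracking the local and global Tate pairings simultaneously with the Inflation--Restriction structure underlying diagram \eqref{equation, Yu commutative diagram}.

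Finally, the ``in particular'' clause is immediate from Yu's theorem: if $\widehat{E(F)} = 0$ then $\widehat{E(F)}^* = 0$, so $\Coker(\mathcal{G}) = 0$ by Theorem \ref{theorem, global duality theorem restated by Yu}; hence $\mathcal{I}$ is trivially the zero map (its codomain is zero), which gives (ii), and therefore (i) by the first equivalence.
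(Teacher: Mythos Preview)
Your argument for (i) $\Leftrightarrow$ (ii) and for the ``in particular'' clause matches the paper's proof exactly: both use Lemma~\ref{lemma, isomorphism between Ker(epsilon-tilda) and Ker(F)}, the identification $\Ker(\mathcal{H})=\Ker(\trans^E_{K/F})$ from \eqref{equation, ker(H) is equal to ker(trans-tilda)}, and the snake-lemma sequence \eqref{equation, exact sequence obtained from Yu commutative diagram} to reduce exactness of \eqref{equation, EC-BRZI} to the vanishing of $\mathcal{I}(\Coker(\mathcal{F}))$; and both deduce the final statement from $\Coker(\mathcal{G})\simeq\widehat{E(F)}^*$.

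The only real divergence is in (ii) $\Leftrightarrow$ (iii). The paper does not carry out the Tate-duality diagram chase you propose; it simply invokes \cite[Lemma~5]{Yu}, which already records the equality $\#\Image(\mathcal{I})=\#\Image(\mathcal{F}_0)$, and this cardinality statement suffices for the equivalence. Your plan to identify $\mathcal{I}$ with the Pontryagin dual of $\mathcal{F}_0$ via local Tate duality is essentially a reconstruction of Yu's argument and is the morally correct explanation, but it is more work than the paper undertakes and, as you yourself flag, requires care in matching the local pairings with the global duality map of Theorem~\ref{theorem, global duality theorem restated by Yu}. The payoff of your route is a conceptual identification rather than a mere numerical coincidence; the payoff of the paper's route is brevity, since Yu has already done the computation.
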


\begin{proof}
By Lemma \eqref{lemma, isomorphism between Ker(epsilon-tilda) and Ker(F)}, we have 
	\begin{equation*}
		\Ker(\widetilde{\Res}_{K/F}^E) \simeq \Ker(\mathcal{F}).
	\end{equation*}
	Hence sequence \eqref{equation, EC-BRZI} is exact if and only if 
	\begin{equation*}
		\frac{\Ker(\trans^E_{K/F})}{\Image\left(\widetilde{\Res}_{K/F}\right)} \simeq \Coker(\mathcal{F}).
	\end{equation*}
	By exact sequence \eqref{equation, exact sequence obtained from Yu commutative diagram} and equality \eqref{equation, ker(H) is equal to ker(trans-tilda)}, the last isomorphism holds if and only if $ \mathcal{I}\left(\Coker(\mathcal{F})\right)=0$ (Thanks to Yu's computations \cite[$\S$2]{Yu}). Also in \cite[Lemma 5]{Yu} it is proved that $\# \Image(\mathcal{I})=\#\Image\left(\mathcal{F}_0\right)$ which implies that the last two assertions are equivalent.
	Finally by Theorem \eqref{theorem, global duality theorem restated by Yu}, one has
	\begin{equation*}
		\Image(\mathcal{I}) \subseteq \Coker(\mathcal{G}) \simeq \widehat{E(F)}^*.
	\end{equation*}
\end{proof}

\begin{remark} \label{remark, EC-BRZ stands for}
Assume that the equivalent assertions in Theorem \ref{theorem, EC-BRZI} hold. Then comparing \eqref{equation, EC-BRZI} with Gonz\'alez-Avil\'es exact sequence \eqref{equation, Aviles exact sequence}, which is equivalent to \eqref{equation, BRZ exact sequence}, one may consider  $\frac{\Ker(\trans^E_{K/F})}{\Image\left(\widetilde{\Res}_{K/F}\right)}$ as elliptic curve analogue of the Ostrowski quotient $\Ost(K/F)$. Although, the assertions (i)--(iii) of Theorem \ref{theorem, EC-BRZI} may not hold in general, see Example \eqref{example, EC-BRZ is not exact} below. The notion of Ostrowski quotient for $E$ relative to $K/F$, in general case, will be defined in Section \ref{section, Ostrowski quotient for elliptic curves}.
\end{remark}

	\begin{corollary} \label{corollary, Ker(I) is isomorphic to Ostrowski quotient for elliptic curves}
		With the assumptions and notations of Theorem \ref{theorem, EC-BRZI}, one has
	\begin{equation*}
		\Ker(\mathcal{I}) \simeq \frac{\Ker(\trans^E_{K/F})}{\Image\left(\widetilde{\Res}_{K/F}\right)}.
	\end{equation*}
\end{corollary}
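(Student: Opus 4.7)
The plan is to extract the desired isomorphism directly from the long exact sequence already produced by the snake lemma applied to diagram \eqref{equation, Yu commutative diagram}, namely
\begin{equation*}
    0 \rightarrow \Ker(\mathcal{F}) \rightarrow \Sha(E/F) \xrightarrow{\widetilde{\Res}_{K/F}^E} \Ker(\mathcal{H}) \xrightarrow{\delta} \Coker(\mathcal{F}) \xrightarrow{\mathcal{I}} \Coker(\mathcal{G}),
\end{equation*}
where $\delta$ is the connecting map. The point is that exactness at $\Coker(\mathcal{F})$ says $\Ker(\mathcal{I}) = \Image(\delta)$, and the first isomorphism theorem applied to $\delta$ will deliver the claim once both $\Ker(\mathcal{H})$ and $\Ker(\delta)$ are identified with the objects appearing in the statement.

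First I would identify $\Ker(\mathcal{H})$ with $\Ker(\trans^E_{K/F})$ using equation \eqref{equation, ker(H) is equal to ker(trans-tilda)}, which was established from \cite[page 213]{Yu} together with the containment \eqref{equation, image of res-tilda is contained in Sha(E/L)^G}. Next, exactness of the sequence at $\Sha(E/F)$ and at $\Ker(\mathcal{H})$ gives $\Ker(\delta) = \Image(\widetilde{\Res}_{K/F}^E)$, so the induced map
\begin{equation*}
    \overline{\delta} : \frac{\Ker(\trans^E_{K/F})}{\Image(\widetilde{\Res}_{K/F}^E)} \longrightarrow \Coker(\mathcal{F})
\end{equation*}
is injective. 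Finally, exactness at $\Coker(\mathcal{F})$ identifies the image of $\overline{\delta}$ with $\Ker(\mathcal{I})$, producing the desired isomorphism
\begin{equation*}
    \Ker(\mathcal{I}) \simeq \frac{\Ker(\trans^E_{K/F})}{\Image(\widetilde{\Res}_{K/F}^E)}.
\end{equation*}

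There is essentially no obstacle here: the work has already been done in assembling the snake-lemma sequence \eqref{equation, exact sequence obtained from Yu commutative diagram} and in proving equation \eqref{equation, ker(H) is equal to ker(trans-tilda)} and Lemma \ref{lemma, isomorphism between Ker(epsilon-tilda) and Ker(F)}. The only care needed is bookkeeping, namely to make sure we are reading $\Ker(\mathcal{I})$ as a subgroup of $\Coker(\mathcal{F})$ (rather than confusing it with the image $\mathcal{I}(\Coker(\mathcal{F}))$ that appears as the right-most nonzero term of \eqref{equation, exact sequence obtained from Yu commutative diagram}). Once this is kept straight, the corollary is an immediate consequence of the exactness already recorded, and no further input from Theorem \ref{theorem, EC-BRZI} beyond the assumption that $\Sha(E/F)$ and $\Sha(E/K)$ are finite (so that the snake lemma output is meaningful) is required.
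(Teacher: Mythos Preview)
Your argument is correct and follows essentially the same route as the paper: both start from the snake-lemma sequence \eqref{equation, exact sequence obtained from Yu commutative diagram}, identify $\Ker(\mathcal{H})$ with $\Ker(\trans^E_{K/F})$ via \eqref{equation, ker(H) is equal to ker(trans-tilda)}, and then read off the isomorphism from exactness. Your presentation is slightly more explicit (naming the connecting map $\delta$ and invoking the first isomorphism theorem), but the content is the same; note that the finiteness of $\Sha$ is a standing hypothesis in the section rather than something the snake lemma itself requires.
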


\begin{proof}
Using exact sequence \eqref{equation, exact sequence obtained from Yu commutative diagram} and Lemma \ref{lemma, isomorphism between Ker(epsilon-tilda) and Ker(F)} we find the sequence
	{\small
		\begin{equation*}
			0 \rightarrow  \Ker\left(\widetilde{Res}_{K/F}^E\right) \rightarrow \Sha(E/F) \rightarrow \Ker(\trans^E_{K/F}) \rightarrow \Coker(\mathcal{F}) \xrightarrow{\mathcal{I}} \Image(\mathcal{I}) \rightarrow 0,
	\end{equation*}	}
	is exact. Equivalently, we have the following exact sequence
	{\small
		\begin{equation*}
			0 \rightarrow  \Image\left(\widetilde{Res}_{K/F}^E\right)  \rightarrow  \Ker(\trans^E_{K/F}) \rightarrow \Coker(\mathcal{F}) \xrightarrow{\mathcal{I}} \Image(\mathcal{I}) \rightarrow 0,
	\end{equation*}	}
		and the assertion is proved. 

\end{proof}

\begin{corollary} \label{corollary, if the norm map is surjective then the analogue of  BRZ is exact}
	With the assumptions and notations of Theorem \ref{theorem, EC-BRZI}, if the norm map
	\begin{align*}
		N_{K/F}^E: E(K) & \rightarrow E(F) \\
		P & \mapsto  \sum_{\sigma \in G} \sigma(P)
	\end{align*}
	is surjective, e.g. if $E(F)=0$, then 
	\begin{itemize}
		\item[(1)] 
		sequence \eqref{equation, EC-BRZI} is exact;
		\item[(2)] for $G$ cyclic,  $\Ker(\trans^E_{K/F})=\Sha(E/K)^G$.
	\end{itemize}

\end{corollary}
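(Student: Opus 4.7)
The plan is to deduce both parts directly from Theorem \ref{theorem, EC-BRZI} together with basic properties of Tate cohomology. The key observation is that the group $\widehat{H}^0(G,E(K))$ appearing in the map $\mathcal{F}_0$ of statement (iii) is, by definition of Tate cohomology in dimension zero for a finite group,
\begin{equation*}
\widehat{H}^0(G,E(K)) \;=\; \frac{E(K)^G}{N_{K/F}^E(E(K))} \;=\; \frac{E(F)}{N_{K/F}^E(E(K))}.
\end{equation*}
Hence surjectivity of $N_{K/F}^E$ (which holds trivially when $E(F)=0$) makes $\widehat{H}^0(G,E(K))$ vanish, so the domain of $\mathcal{F}_0$ is zero and in particular $\mathcal{F}_0$ is the zero map. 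Applying the implication (iii)$\Rightarrow$(i) of Theorem \ref{theorem, EC-BRZI} yields exactness of \eqref{equation, EC-BRZI}, establishing part (1).

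For part (2), the extra hypothesis that $G$ is cyclic lets me exploit the $2$-periodicity of Tate cohomology: there is an isomorphism
\begin{equation*}
H^2(G,E(K)) \;\simeq\; \widehat{H}^2(G,E(K)) \;\simeq\; \widehat{H}^0(G,E(K)) \;=\; \frac{E(F)}{N_{K/F}^E(E(K))}.
\end{equation*}
Under the surjectivity hypothesis this group is trivial, so the transgression
\begin{equation*}
\trans^E_{K/F} : \Sha(E/K)^G \longrightarrow H^2(G,E(K)) = 0
\end{equation*}
is the zero map, and therefore $\Ker(\trans^E_{K/F}) = \Sha(E/K)^G$.

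Neither part requires a hard step; the only subtlety is properly identifying the zeroth Tate cohomology group with the cokernel of the norm map and invoking the cyclic-group periodicity in part (2). The one small sanity check worth recording is that $E(K)^G = E(F)$, which is immediate since $E$ is defined over $F$ and the $G$-action on $E(K)$ is the restriction of $G_F$-action fixing precisely the $F$-points. No further obstacle is anticipated.
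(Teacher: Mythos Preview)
Your proof is correct and follows essentially the same route as the paper: both parts use that $\widehat{H}^0(G,E(K))=E(F)/N_{K/F}^E(E(K))$ vanishes under the surjectivity hypothesis, yielding (1) via the implication (iii)$\Rightarrow$(i) of Theorem~\ref{theorem, EC-BRZI}, and (2) via the periodicity $H^2(G,E(K))\simeq\widehat{H}^0(G,E(K))=0$ for cyclic $G$.
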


\begin{proof}
	(1)	Since the norm map $N_{K/F}^E$ is surjective, we have
	\begin{equation*}
		\widehat{H}^0(G,E(K))=\frac{E(F)}{N_{K/F}^E(E(K))}=0.
	\end{equation*}
	Hence the localization map $\mathcal{F}_0$  \eqref{equation, map F0prime for elliptic curves}
	is zero and exactness of \eqref{equation, EC-BRZI} follows from Theorem \ref{theorem, EC-BRZI}.
	
	(2) Since $K/F$ is cyclic, we have
	\begin{equation*}
		H^2(G,E(K))=\widehat{H}^0(G,E(K))=0.
	\end{equation*}
	Therefore the transgression map $\trans^E_{K/F}$ \eqref{equation, definition of trans-tilda for Tate-Shafarevich groups} is the zero map.
\end{proof}

\begin{remark} \label{Remark, if the norm map on units is surjective}
Applying the same above reasoning for the transgression map $\trans_{K/F}$, appeared in Remark \ref{remark, transgression map in Aviles paper},
 one can obtain the number field analogue of part (2) of Corollary \ref{corollary, if the norm map is surjective then the analogue of  BRZ is exact}, i.e., if $K/F$  is cyclic and  the norm map $N_{K/F}:U_K \rightarrow U_F$ is surjective, then $\Po(K/F)=\Cl(K)^G$. Note that by Corollary \ref{corollary, Po(L/K) coincides with Cl(L)_trans}, $\Po(K/F)=\Ker\left(\trans_{K/F}\right)$.
\end{remark}

\begin{corollary} \label{corollary, order of the quotient of ker(trans-tilda) over Sha(E/K)}
	With the assumptions and notations of Theorem \ref{theorem, EC-BRZI}, if sequence \eqref{equation, EC-BRZI} is exact, then
	\begin{equation} \label{equation, order of ker(trans-tilda)}
	\frac{\# \Ker(\trans^E_{K/F})}{\# \Sha(E/F)}=\frac{\prod_{v \in M_F} \# H^1\left(G_{w},E(K_{w})\right)}{\# H^1\left(G,E(K)\right)}.
\end{equation}
Furthermore, $\trans^E_{K/F}$ is the zero map if and only if
\begin{equation*}
\frac{\# \Sha(E/K)^G}{\# \Sha(E/F)}=\frac{\prod_{v \in M_F} \# H^1\left(G_{w},E(K_{w})\right)}{\# H^1\left(G,E(K)\right)}.
\end{equation*}
\end{corollary}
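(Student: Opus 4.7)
The plan is to derive the first equality by counting orders along the four-term exact sequence \eqref{equation, EC-BRZI}, and then to deduce the second identity from the first by using the definition of $\trans^E_{K/F}$.

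First I would verify that every group appearing in \eqref{equation, EC-BRZI} is finite. The term $\Ker(\widetilde{\Res}_{K/F}^E)$ sits inside $\Sha(E/F)$, which is finite by the standing hypothesis; $H^1(G, E(K))$ is finite because $G$ is finite and $E(K)$ is finitely generated by Mordell--Weil; the direct sum $\bigoplus_{v \in M_F} H^1(G_w, E(K_w))$ is finite because $H^1(G_w, E(K_w)) = 0$ at primes of good reduction of $E$ that are unramified in $K/F$ (a consequence of Lang's theorem), leaving only finitely many nonzero summands, each itself finite; and the rightmost term is a subquotient of a finite group. For any exact sequence of finite abelian groups
\[
0 \to A \to B \to C \to D \to 0,
\]
multiplicativity of orders gives $\# A \cdot \# C = \# B \cdot \# D$. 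Applied to \eqref{equation, EC-BRZI}, this yields
\[
\# \Ker(\widetilde{\Res}_{K/F}^E) \cdot \prod_{v \in M_F} \# H^1(G_w, E(K_w)) \;=\; \# H^1(G, E(K)) \cdot \frac{\# \Ker(\trans^E_{K/F})}{\# \Image(\widetilde{\Res}_{K/F}^E)}.
\]

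Next I would combine this with the tautological short exact sequence
\[
0 \to \Ker(\widetilde{\Res}_{K/F}^E) \to \Sha(E/F) \to \Image(\widetilde{\Res}_{K/F}^E) \to 0,
\]
which forces $\# \Sha(E/F) = \# \Ker(\widetilde{\Res}_{K/F}^E) \cdot \# \Image(\widetilde{\Res}_{K/F}^E)$. Substituting $\# \Ker(\widetilde{\Res}_{K/F}^E) = \# \Sha(E/F)/\# \Image(\widetilde{\Res}_{K/F}^E)$ into the previous equation and cancelling the common factor $\# \Image(\widetilde{\Res}_{K/F}^E)$ from both sides gives exactly
\[
\frac{\# \Ker(\trans^E_{K/F})}{\# \Sha(E/F)} \;=\; \frac{\prod_{v \in M_F} \# H^1(G_w, E(K_w))}{\# H^1(G, E(K))},
\]
which is the first identity of the corollary.

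For the second assertion I would observe that, by \eqref{equation, definition of trans-tilda for Tate-Shafarevich groups}, $\trans^E_{K/F}$ is a homomorphism with domain $\Sha(E/K)^G$, so it is identically zero if and only if $\Ker(\trans^E_{K/F}) = \Sha(E/K)^G$, and (since both groups are finite) this is in turn equivalent to the numerical equality $\# \Ker(\trans^E_{K/F}) = \# \Sha(E/K)^G$. Substituting into the first formula produces the claimed identity. The only genuinely nontrivial step in the whole argument is justifying finiteness of the middle direct sum via the vanishing of $H^1(G_w, E(K_w))$ at almost all places; everything else is pure bookkeeping of orders along an exact sequence.
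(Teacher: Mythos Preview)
Your argument is correct and is precisely the order-counting along the exact sequence \eqref{equation, EC-BRZI} that the paper's one-line proof (``Immediately follows from Theorem \ref{theorem, EC-BRZI}'') leaves implicit. You have simply spelled out the details the paper omits, including the finiteness justifications and the observation that $\trans^E_{K/F}=0$ is equivalent to $\Ker(\trans^E_{K/F})=\Sha(E/K)^G$.
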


\begin{proof}
	Immediately follows from Theorem \ref{theorem, EC-BRZI}.
\end{proof}


As a generalization of the main theorem in \cite{Aviles2000}, Yu obtained the following result.

\begin{proposition} \cite[Corollary 7]{Yu} \label{proposition, Yu result for generalization of Aviles result}
	Let $E/F$ be an elliptic curve defined over $F$ and $K/F$  a finite Galois extension of number fields with Galois group $G$. If
	\begin{equation*}
		\widehat{H}^0(G,E(K))=H^2\left(G,E(K)\right)=0,
	\end{equation*}
	then
	\begin{equation*}
			\frac{\# \Sha(E/K)^G}{\# \Sha(E/F)}=\frac{\prod_{v \in M_F} \# H^1\left(G_{w},E(K_{w})\right)}{\# H^1\left(G,E(K)\right)}.
	\end{equation*}
\end{proposition}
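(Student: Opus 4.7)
The plan is to derive this proposition as a direct consequence of Corollary \ref{corollary, order of the quotient of ker(trans-tilda) over Sha(E/K)}, whose \emph{furthermore} clause already gives precisely the claimed identity under two conditions: exactness of sequence \eqref{equation, EC-BRZI} and vanishing of the transgression map $\trans^E_{K/F}$. So the entire proof reduces to checking that the hypotheses $\widehat{H}^0(G,E(K))=0$ and $H^2(G,E(K))=0$ produce these two conditions.

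First I would handle exactness of \eqref{equation, EC-BRZI}. By the equivalence (i)$\Leftrightarrow$(iii) of Theorem \ref{theorem, EC-BRZI}, this is the same as showing that the localization map
\begin{equation*}
\mathcal{F}_0:\widehat{H}^0(G,E(K)) \rightarrow \prod_{v \in M_F} \widehat{H}^0(G_w,E(K_w))
\end{equation*}
is zero; but $\widehat{H}^0(G,E(K))=0$ by hypothesis, so this is automatic. Next I would handle the vanishing of $\trans^E_{K/F}$: by the definition \eqref{equation, definition of trans-tilda for Tate-Shafarevich groups}, this map takes values in $H^2(G,E(K))$, which is zero by hypothesis, so $\trans^E_{K/F}$ is trivially the zero map. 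In particular, by \eqref{equation, ker(H) is equal to ker(trans-tilda)} we obtain $\Ker(\trans^E_{K/F})=\Sha(E/K)^G$.

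Putting the two together, the \emph{furthermore} part of Corollary \ref{corollary, order of the quotient of ker(trans-tilda) over Sha(E/K)} yields the claimed identity. There is no serious obstacle at this stage: the proposition is a clean unpacking of the two hypotheses through the machinery already developed; the substantive content (the snake-lemma extraction of \eqref{equation, exact sequence obtained from Yu commutative diagram}, the identification $\Ker(\mathcal{H})=\Ker(\trans^E_{K/F})$, and Yu's Lemma 5 giving $\#\Image(\mathcal{I})=\#\Image(\mathcal{F}_0)$) has already been absorbed into the preceding corollary. The only point one should double-check carefully is that the finiteness hypotheses on $\Sha(E/F)$ and $\Sha(E/K)$ (fixed by the convention at the start of Section \ref{section, Main Results}) justify taking the orders of the groups appearing on the left-hand side of \eqref{equation, order of ker(trans-tilda)}.
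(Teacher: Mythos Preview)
Your proposal is correct and matches the paper's own treatment: the paper does not give a separate proof of this proposition (it is cited from Yu), but in Remark \ref{remark, our corollary is a generalization ofresults of Yu and Aviles} it explains exactly the derivation you wrote---$\widehat{H}^0(G,E(K))=0$ forces $\mathcal{F}_0=0$ and hence exactness of \eqref{equation, EC-BRZI} via Theorem \ref{theorem, EC-BRZI}, while $H^2(G,E(K))=0$ forces $\trans^E_{K/F}=0$, so the \emph{furthermore} clause of Corollary \ref{corollary, order of the quotient of ker(trans-tilda) over Sha(E/K)} applies.
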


\begin{remark} \label{remark, our corollary is a generalization ofresults of Yu and Aviles}
	If $\widehat{H}^0(G,E(K))=0$, then the localization map $\mathcal{F}_0$ \eqref{equation, map F0prime for elliptic curves} is zero, so \eqref{equation, EC-BRZI} is exact by Theorem \ref{theorem, EC-BRZI}. Also 
	triviality of $H^2\left(G,E(K)\right)$ implies that the map  $\trans^E_{K/F}$ \eqref{equation, definition of trans-tilda for Tate-Shafarevich groups} is the zero map. Whereas, the converse of these statements may not hold in general. Therefore,  Corollary \eqref{corollary, order of the quotient of ker(trans-tilda) over Sha(E/K)} can be thought  as a generalization of Proposition \eqref{proposition, Yu result for generalization of Aviles result} and \cite[Main Theorem]{Aviles2000} (It would be a challenging task to find some examples to show that this is a \textit{strict} generalization). Consequently, the family of elliptic curves suggested by Gonz\'alez-Avil\'es \cite[$\S$1]{Aviles2000} satisfy in equivalent assertions of Theorem \ref{theorem, EC-BRZI}. In other words, for $E/\mathbb{Q}$ an elliptic curve defined over $\mathbb{Q}$ given by a Weierstrass equation of negative discriminant and $K/\mathbb{Q}$  a finite Galois extension for which $E(K)$ is finite of order prime to $[K:\mathbb{Q}]$, \eqref{equation, EC-BRZI} is exact. In contrast, one can find examples of elliptic curves over finite Galois extensions of number fields, for which  \eqref{equation, EC-BRZI} is not exact, see Example \ref{example, EC-BRZ is not exact} below. 
\end{remark}

\subsection{Ostrowski quotient of $\mathbb{G}_{m,F}$} \label{subsection, Ostrowski quotient of Gm,F}
As mentioned before, the multiplicative group $\mathbb{G}_{m,F}$ and the elliptic curve $E$ have N\'eron models over $\mathcal{O}_F$. Although these models are different, yet this is a key point to define the Ostrowski quotient for these two algebraic groups over $F$. First consider $\mathbb{G}_{m,F}$ and
denote its N\'eron model over $\mathcal{O}_F$ by $\mathcal{G}_m$. In \cite[Example 1, p.291]{BLR} it is proved that the identity component of $\mathcal{G}_m$ is $\mathcal{G}_m^0=\mathbb{G}_{m,\mathcal{O}_F}$. Then the canonical sequence of $\mathcal{O}_F$-group schemes
\begin{equation*}
	0 \rightarrow  \mathcal{G}_m^0 \rightarrow \mathcal{G}_m \rightarrow \mathcal{G}_m/\mathcal{G}_m^0 \rightarrow 0
\end{equation*}
is
\begin{equation} \label{equation, exact sequnece from gm to sum ipstar}
	0 \rightarrow \mathbb{G}_{m,\mathcal{O}_F} \rightarrow \mathcal{G}_m \rightarrow \bigoplus_{\mathfrak{p} \in \mathbb{P}_F} (i_{\mathfrak{p}})_*\mathbb{Z} \rightarrow 0,
\end{equation}

where, for each $\mathfrak{p} \in \mathbb{P}_F$, the morphism $i_{\mathfrak{p}}:\text{Spec}\, k(\mathfrak{p}) \rightarrow \text{Spec}\, \mathcal{O}_F$ is induced by the canonical projection $\mathcal{O}_F \rightarrow k(\mathfrak{p})=\mathcal{O}_F/\mathfrak{p}$. Since $\mathcal{G}_m^0(\mathcal{O}_F)=\mathbb{G}_{m,\mathcal{O}_F}(\mathcal{O}_F)=U_F$ and $\mathcal{G}_m(\mathcal{O}_F)=F^{\times}$, the \'etale $\mathcal{O}_F$-cohomology sequence induced by \eqref{equation, exact sequnece from gm to sum ipstar} yields the exact sequence of abelian groups 
\begin{equation} \label{equation, exact sequence sumZ to etale H1}
	0 \rightarrow U_F \rightarrow F^{\times} \rightarrow \bigoplus_{\mathfrak{p} \in \mathbb{P}_F} \mathbb{Z}\rightarrow H_{\text{\'et}}^1(\mathcal{O}_F,\mathbb{G}_m) \rightarrow 0.
\end{equation}

The $0$ at the right comes from the fact that $H_{\text{\'et}}^1(\mathcal{O}_F,\mathcal{G}_m)$ injects into $H^1(F,\mathbb{G}_m)$, which is $0$ by Hilbert's Theorem 90. Since 
\begin{equation*}
	\bigoplus_{\mathfrak{p} \in \mathbb{P}_F} \mathbb{Z} \simeq I(F) \quad \text{and} \quad H_{\text{\'et}}^1(\mathcal{O}_F,\mathbb{G}_m) \simeq \text{Pic}(\mathcal{O}_F) \simeq \Cl(F),
\end{equation*}
the last sequence is simply the well-known exact sequence 
\begin{equation*}
	0 \rightarrow U_F \rightarrow F^{\times} \rightarrow I(F) \rightarrow \Cl(F) \rightarrow 0.
\end{equation*}

Working over $K$ now, we similarly obtain an exact sequence of $G$-modules
\begin{equation} \label{equation, exact sequence UK to K* to IK to ClK}
	0 \rightarrow U_K \rightarrow K^{\times} \rightarrow I(K) \rightarrow \Cl(K) \rightarrow 0.
\end{equation}
The image of the middle map in \eqref{equation, exact sequence UK to K* to IK to ClK} is $P(K)$, so we obtain the short exact sequence of $G$-modules
\begin{equation} \label{equation, exact sequence UK to K* to PK}
	0 \rightarrow U_K \rightarrow K^{\times} \rightarrow P(K) \rightarrow 0.
\end{equation}
Taking $G$-cohomology of \eqref{equation, exact sequence UK to K* to PK}, we obtain an exact sequence of abelian groups
\begin{equation} \label{equation, exact sequence F* to PKG to H1}
	F^{\times} \rightarrow P(K)^G \rightarrow H^1(G,U_K) \rightarrow 0.
\end{equation}
using Corollary \ref{corollary, Po(L/K) coincides with Cl(L)_trans}, one is then naturally led to study the cokernel of the map
\begin{equation} \label{equation, lambdaGm and K/F}
	\lambda_{\mathcal{G}_m^0,K/F}:H^1(G,U_K) \rightarrow \bigoplus_{v \in M_F} H^1(G_w,U_{K_w}),
\end{equation}
i.e., the \textit{Ostrowski quotient of $\mathbb{G}_{m,F}$ relative to $K/F$}:
\begin{equation} \label{equation, Ostrowski quotient of Gm,F}
	\Ost(\mathbb{G}_{m,F},K/F)=\Coker \lambda_{\mathcal{G}_m^0,K/F},
\end{equation}
which is, by exact sequence \eqref{equation, Aviles exact sequence}, equal to $\frac{\Cl(K)^G_{\trans}}{\epsilon_{K/F}(\Cl(F))}$.


\subsection{Ostrowski quotient for elliptic curves} \label{section, Ostrowski quotient for elliptic curves}
Using elliptic curve analogue of computations in Section \ref{subsection, Ostrowski quotient of Gm,F}, one can define the notion of Ostrowski quotient for $E$ relative to $K/F$. 
Consider an \textit{integral} equation for $E$, i.e., an equation for $E$ with coefficients in $\mathcal{O}_F$, and denote by $\mathcal{E}$ the N\'eron model of $E$ defined over $\mathcal{O}_F$\footnote{It is like a family of curves which includes, as one member of the family, the curve $E$ viewed over $F$ (i.e., $E$ with its original non-integral equation equation over $F$). All but finitely many of the members of this family are elliptic curves (defined over the residue fields $k(\mathfrak{p})$). The finitely many exceptions occur over the primes of bad reduction for $E$. Over every such bad prime $\mathfrak{p}$, when does not get an elliptic curve, but rather a finite union of connected linear groups of the form $T \times \mathbb{G}_{a,k(\mathfrak{p})}^n$, where $T$ is a $k(\mathfrak{p})$-torus, $\mathbb{G}_{a,k(\mathfrak{p})}^n$ is the additive group over $k(\mathfrak{p})$ and $n \geq 0$ is an integer.}. Then the analogue of sequence \eqref{equation, exact sequnece from gm to sum ipstar} is
\begin{equation} \label{equation, exact sequnece from mathcalE to sum ivstar}
	0 \rightarrow \mathcal{E}^0 \rightarrow \mathcal{E} \rightarrow \bigoplus_{v \in M_F^0} (i_v)_* \Phi_v(E) \rightarrow 0,
\end{equation}
where for each $v \in M_F^0$, $\Phi_v(E)$ is the (finite, algebraic) $k(v)$-group of components of $\mathcal{E}$ at $v$ (note that the above sum is finite since it is concentrated on the set of primes of bad reduction for $E$. This is contrast to the case of $\mathbb{G}_{m,F}$, since the sum is in \eqref{equation, exact sequnece from gm to sum ipstar} is infinite). Taking \'etale $\mathcal{O}_F$-cohomology of \eqref{equation, exact sequnece from mathcalE to sum ivstar}, we obtain the following analog of \eqref{equation, exact sequence sumZ to etale H1}
\begin{equation}
	0 \rightarrow E_0(F) \rightarrow E(F) \rightarrow \bigoplus_{v \in M_F^0} \Phi_v(E)(k(v)) \rightarrow H_{\text{\'et}}^1(\mathcal{O}_F,\mathcal{E}^0) \rightarrow \dots ,
\end{equation}
where $E_0(F):=\mathcal{E}^0(\mathcal{O}_F)$ is the elliptic analog of $U_F$. The next term in the above sequence, namely $H_{\text{\'et}}^1(\mathcal{O}_F,\mathcal{E})$, injects into $H^1(G_K,E)$, but the latter group is not $0$ anymore (no Hilbert 90 for an elliptic curve).

Working over $K$ now, let $\mathcal{E}^K$ denote the N\'eron model of the $K$-elliptic curve $E_K$. We similarly obtain an exact sequence of $G$-modules which is the elliptic analog of \eqref{equation, exact sequence UK to K* to IK to ClK}, namely
\begin{equation*}
	0 \rightarrow E_0(K) \rightarrow E(K) \rightarrow \bigoplus_{w \in M_K^0} \Phi_w(E_K)(k(w)) \rightarrow H_{\text{\'et}}^1(\mathcal{O}_K,(\mathcal{E}^K)^0) \rightarrow \dots ,
\end{equation*}
where $E_0(K):=(\mathcal{E}^K)^0(\mathcal{O}_K)$ plays the role of $U_K$. The analog of sequence \eqref{equation, exact sequence UK to K* to PK} is then the sequence
\begin{equation} \label{equation, exact sequence E0(K) to E(K) to its quotients}
	0 \rightarrow E_0(K) \rightarrow E(K) \rightarrow E(K)/E_0(K) \rightarrow 0.
\end{equation}

Thus, if we view $\oplus_{w \in M_K^0} \Phi_w(E_K)(k(w))$ as the analog of $I(K)$, then the image of $E(K)/E_0(K)$ in $\oplus_{w \in M_K^0} \Phi_w(E_K)(k(w))$ is the analog of $P(K)$ (but note that $E(K)/E_0(K)$ is a finite group). Taking $G$-cohomology of \eqref{equation, exact sequence E0(K) to E(K) to its quotients}, we obtain the elliptic analog of \eqref{equation, exact sequence F* to PKG to H1}, namely
\begin{equation*}
	E(F) \rightarrow (E(K)/E_0(K))^G \rightarrow H^1(G,E_0(K)) \rightarrow H^1(G,E(K)) \rightarrow \dots .
\end{equation*}
Now we see that the natural analog of the map \eqref{equation, lambdaGm and K/F} is the map
\begin{equation} \label{equation, lambda mathcalE0 and K/F}	
	\lambda_{\mathcal{E}^0,K/F}: H^1(G,E_0(K)) \rightarrow \bigoplus_{v \in M_F} H^1(G_w,E_0(K_w))
\end{equation}
where for $v$ archimedean, $E_0(K_w):=E(K_w)$ and $H^1(G,E_0(K)) \rightarrow H^1(G_w,E_0(K_w))$
is the composition
\begin{equation*}
	H^1(G,E_0(K)) \rightarrow H^1(G,E(K)) \rightarrow H^1(G_w,E(K_w)).
\end{equation*}
The map \eqref{equation, lambda mathcalE0 and K/F} fits into an exact and commutative diagram of abelian groups \footnote{The quotient groups $E(K_w)/E_0(K_w)$ are discussed in \cite[VII, $\S$6]{Silverman}.}
\begin{equation} \label{equation, commutative diagram for lambda  mathcalE0 and K/F}
	{\scriptsize
		\begin{tikzcd}
			\left(\frac{E(K)}{E_0(K)}\right)^G \arrow[r] \arrow[d] &	H^1(G,E_0(K)) \arrow[r] \arrow[d,"\lambda_{\mathcal{E}^0,K/F}"]
			& H^1(G,E(K)) \arrow[d, "\lambda_{\mathcal{E},K/F}"] \\
			\bigoplus_{v \in M_F} \left(\frac{E(K_w)}{E_0(K_w)}\right)^{G(w)} \arrow[r] &	\bigoplus_{v \in M_F} H^1(G_{w},E_0(K_w)) \arrow[r]
			& \bigoplus_{v \in M_F} H^1(G_{w},E(K_w)),
		\end{tikzcd}
	}
\end{equation}
where $G(w)=\text{Gal}(k(w)/k(v))$ if $v \in M_F^0$ and $G(w)=0$ otherwise. Thus the natural elliptic analog of the group $\Ost(\mathbb{G}_{m,F},K/F)$, introduced as in \eqref{equation, Ostrowski quotient of Gm,F}, is the cokernel of the middle vertical map in the above diagram.
\begin{definition} \label{definition, Ost(E,K/F)}
The group
\begin{equation} \label{equation, Ostrowski quotient of E,K/F}
	\Ost(E,K/F):=\Coker \, \lambda_{\mathcal{E}^0,K/F}
\end{equation}
is called the \textit{Ostrowski quotient of $E$ relative to $K/F$}.
\end{definition}
Now the canonical exact sequence 
{\small
	\begin{equation*} \label{equation, EC-BRZ exact sequence}
		\tag{EC-BRZ} 
		0 \rightarrow \Ker(\lambda_{\mathcal{E}^0,K/F}) \rightarrow H^1(G,E_0(K)) \rightarrow \bigoplus_{v \in M_F} H^1(G_w,E_0(K_w)) \rightarrow \Ost(E,K/F) \rightarrow 0
\end{equation*}}
can be thought as the elliptic analog of Gonz\'alez-Avil\'es exact sequence \eqref{equation, Aviles exact sequence} which is, by Corollary  \ref{corollary, Po(L/K) coincides with Cl(L)_trans}, equivalent to \eqref{equation, BRZ exact sequence}. 
Note that ``EC-BRZ'' stands for the \textit{Elliptic Curve analogue of} \eqref{equation, BRZ exact sequence}.
\begin{remark} \label{remark, complication for Ost(E,K/F)}
	Studying the structure of $\Ost(E,K/F)$ is (much) more difficult than studying that of $\Ost(\mathbb{G}_{m,F},K/F)$. In the case of $\Ost(E,K/F)$ one has to take into account the primes of bad reduction for $E$, whereas in the case of $\Ost(\mathbb{G}_{m,F},K/F)$ \textit{there are no primes of bad reduction}. Indeed, if $T$ is an algebraic torus over $F$ with N\'eron model $\mathcal{T}$ over $\mathcal{O}_F$, one says that $T$ has \textit{good reduction} at a prime $\mathfrak{p} \in \mathbb{P}_F$ if the connected algebraic $k(\mathfrak{p})$-group $\mathcal{T}_{k(\mathfrak{p})}^0$ is a $k(\mathfrak{p})$-torus. When $T=\mathbb{G}_{m,F}$, we have 
	\begin{equation*}
		\mathcal{T}_{k(\mathfrak{p})}^0=(\mathcal{G}_m^0)_{k(\mathfrak{p})}=(\mathbb{G}_{m,\mathcal{O}_F})_{k(\mathfrak{p})}=\mathbb{G}_{m,k(\mathfrak{p})},
	\end{equation*}
	which is certainly a $k(\mathfrak{p})$-torus for any $\mathfrak{p} \in \mathbb{P}_F$. So $\mathbb{G}_{m,F}$ has \textit{good reduction everywhere over $F$}. Consequently, the complications that arise in the study of $\Ost(\mathbb{G}_{m,F},K/F)$ come \textit{mainly from the primes of $F$ that ramify in $K$}. Further, $T=\mathbb{G}_{m,F}$ is just \textit{one} (very simple) $1$-dimensional algebraic group with very simple reductions everywhere, whereas the class of all elliptic curves over $F$ is an infinite family of $1$-dimensional algebraic groups whose members degenerate (i.e., have bad reduction) at some primes of $F$. The above comments clearly show that the gulf that separates the classical case $\Ost(\mathbb{G}_{m,F},K/F)$ from the elliptic case $\Ost(E,K/F)$ is \textit{truly vast}.
\end{remark}

Although $\Ost(E,K/F)$ is the ``correct'' analog of $\Ost(\mathbb{G}_{m,F},K/F)$, one should also investigate the cokernel of the right-hand vertical map in diagram \eqref{equation, commutative diagram for lambda  mathcalE0 and K/F}, i.e., the map $\lambda_{\mathcal{E},K/F}$ which is the same localization map $\mathcal{F}$ appeared in diagram \eqref{equation, Yu commutative diagram}.

\begin{definition} \label{definition, coarse Ostrowski quotient}
	The group
	\begin{equation} \label{equation, coarse Ostrowski quotient of E}
		\Ost_c(E,K/F)=\Coker \, \lambda_{\mathcal{E},K/F},
	\end{equation}
	is called the \textit{coarse Ostrowski quotient of $E$ relative to $K/F$} (``coarse'' because it does not take into account the primes of bad reduction for $E_K$, in contrast to the group $\Ost(E,K/F)$). 
\end{definition}
\begin{remark} \label{remark, Ostrowski and coarse Ostrowski may coincide}
Note that the analog of $\Ost_c(E,K/F)$ in the classical case is the \textit{trivial group} (because of Hilbert's Theorem 90). Also note that there exists instances where the groups  $\Ost(E,K/F)$ and  $\Ost_c(E,K/F)$ \textit{coincide}. For example, if $j(E) \in \mathbb{Z}$ (e.g., $E$ has complex multiplication \cite[VII, Exersice 7.10]{Silverman}), then there exists an extension $K/F$ as above such that $E/K$ has good reduction at all primes of $K$ \cite[VII, Proposition 5.5]{Silverman}, whence $E(K)=E_0(K)$ and (therefore)  $\Ost(E,K/F)=\Ost_c(E,K/F)$.  
\end{remark}

 The main result of this paper is the following non-trivial structure theorem for the group $\Ost_c(E,K/F)$.

\begin{theorem} \label{Main Theorem}
There exists a canonical exact sequence of abelian groups
	\begin{equation} \label{equation, exact sequence in Main Theorem}
		0 \rightarrow \frac{\Ker(\trans^E_{K/F})}{\Image\left(\widetilde{\Res}_{K/F}\right)} \rightarrow \Ost_c(E,K/F) \xrightarrow{\mathcal{I}} \widehat{E(F)}^*,
	\end{equation}
where $\widetilde{\Res}_{K/F}^E: \Sha(E/F) \rightarrow \Sha(E/K)^G$ and  $\trans^E_{K/F}:\Sha(E/K)^G \rightarrow H^2(G,E(K))$ denote the restriction map \eqref{equation, res-tilda} and
 the transgression map \eqref{equation, definition of trans-tilda for Tate-Shafarevich groups}, respectively. Also
\begin{equation*}
\mathcal{I}:\Ost_c(E,K/F) \rightarrow \Coker\left(H^1(G_F,E) \xrightarrow{\mathcal{G}} \bigoplus_{v \in M_F} H^1(G_{F_v},E)\right)
\end{equation*}
is the map \eqref{equation, map I in Yu paper}, and $\widehat{E(F)}^*=\Hom_{\cts}(\widehat{E(F)},\mathbb{Q}/\mathbb{Z})$ as introduced in Theorem \eqref{theorem, global duality theorem restated by Yu}.
\end{theorem}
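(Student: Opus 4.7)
\medskip

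\noindent\textbf{Proof proposal.} The plan is to show that the exact sequence of Theorem \ref{Main Theorem} is essentially a reassembly of the long exact sequence \eqref{equation, exact sequence obtained from Yu commutative diagram} obtained by applying the snake lemma to the diagram \eqref{equation, Yu commutative diagram}. All the identifications required have already been established in the preceding subsection, so the main work is to put the pieces together in the right order.

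First I would recall the snake lemma sequence
\[
0 \rightarrow \Ker(\mathcal{F}) \rightarrow \Sha(E/F) \xrightarrow{\widetilde{\Res}_{K/F}^E} \Ker(\mathcal{H}) \rightarrow \Coker(\mathcal{F}) \xrightarrow{\mathcal{I}} \Coker(\mathcal{G})
\]
from \eqref{equation, exact sequence obtained from Yu commutative diagram}, noting that the exact sequence quoted there terminates at $\mathcal{I}(\Coker(\mathcal{F}))$ only because the authors truncated the final map, but that the snake lemma actually produces the connecting map landing in $\Coker(\mathcal{G})$ as written above. Then I would substitute the two key identifications: Lemma \ref{lemma, isomorphism between Ker(epsilon-tilda) and Ker(F)} gives $\Ker(\mathcal{F}) \simeq \Ker(\widetilde{\Res}_{K/F}^E)$ (so the first four terms just encode the definition of $\widetilde{\Res}_{K/F}^E$), while equality \eqref{equation, ker(H) is equal to ker(trans-tilda)} gives $\Ker(\mathcal{H}) = \Ker(\trans^E_{K/F})$. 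After these substitutions, the portion of the sequence beginning at $\Sha(E/F)$ reads
\[
\Sha(E/F) \xrightarrow{\widetilde{\Res}_{K/F}^E} \Ker(\trans^E_{K/F}) \rightarrow \Coker(\mathcal{F}) \xrightarrow{\mathcal{I}} \Coker(\mathcal{G}),
\]
and extracting the image of $\widetilde{\Res}_{K/F}^E$ from the first arrow yields the left-exact sequence
\[
0 \rightarrow \frac{\Ker(\trans^E_{K/F})}{\Image(\widetilde{\Res}_{K/F}^E)} \rightarrow \Coker(\mathcal{F}) \xrightarrow{\mathcal{I}} \Coker(\mathcal{G}).
\]

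To finish, I would invoke Definition \ref{definition, coarse Ostrowski quotient} to rewrite $\Coker(\mathcal{F}) = \Ost_c(E,K/F)$, and invoke Yu's Global Duality Theorem \ref{theorem, global duality theorem restated by Yu} to identify $\Coker(\mathcal{G}) \simeq \widehat{E(F)}^*$. This produces the sequence \eqref{equation, exact sequence in Main Theorem} exactly as stated.

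The step I would expect to be trickiest is the verification that the map labelled $\mathcal{I}$ in the statement of Theorem \ref{Main Theorem} agrees, under the isomorphism $\Coker(\mathcal{G}) \simeq \widehat{E(F)}^*$, with the connecting map of the snake lemma; this is really a bookkeeping issue (naturality of the duality isomorphism with respect to the commutative square defining $\mathcal{I}$), but I would want to check it carefully because the finiteness hypotheses on $\Sha(E/F)$ and $\Sha(E/K)$ have so far been used only implicitly, and the duality isomorphism of Theorem \ref{theorem, global duality theorem restated by Yu} relies on them. Apart from this compatibility check, everything else is a direct transcription of already-established material.
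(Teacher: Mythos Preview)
Your proposal is correct and follows essentially the same approach as the paper: the paper simply packages your steps 1--4 into Corollary \ref{corollary, Ker(I) is isomorphic to Ostrowski quotient for elliptic curves} (whose proof is exactly the snake-lemma manipulation you describe), and then cites Theorem \ref{theorem, global duality theorem restated by Yu} for $\Coker(\mathcal{G})\simeq\widehat{E(F)}^*$, just as you do. Your worry about a compatibility check for $\mathcal{I}$ is unnecessary here, since in the statement $\mathcal{I}$ is by definition the map \eqref{equation, map I in Yu paper} into $\Coker(\mathcal{G})$, and the appearance of $\widehat{E(F)}^*$ is merely a renaming of the target via the duality isomorphism.
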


\begin{proof}
Immediately follows from Corollary \ref{corollary, Ker(I) is isomorphic to Ostrowski quotient for elliptic curves}, since $\Image(\mathcal{I}) \subseteq \widehat{E(F)}^*$ by Theorem \ref{theorem, global duality theorem restated by Yu}.
\end{proof}

	\subsection{Investigating $\Ost_c(E,K/F)$ for quadratic extensions} \label{section, EC-BRZ for quadratic extensions}

In this section, for  the class of curves $E$ relative to quadratic extensions $K/F$, we use some results of Qiu \cite{Qiu} and Yu \cite{Yu}, to find some information about the left-hand group and the image of the map $\mathcal{I}$ in exact sequence \eqref{equation, exact sequence in Main Theorem}. Then as application of Theorem \ref{Main Theorem}, we give the structure of $\Ost_c(E,K/F)$ for some explicit examples of $(E,K/F)$ in this family.

\begin{theorem} \label{theorem, order of ker(trans-tilde)}
	For a number field $F$, let $E/F$ be an elliptic curve and $K=F(\sqrt{D})$ be a quadratic extension of $F$ with $Gal(K/F)=<\sigma>$. Denote by $E_D$ the quadratic $D$-twist of $E$. If the Tate-Shafarevich groups $\Sha(E/F)$, $\Sha(E_D/F)$ and $\Sha(E/K)$ are finite and  \eqref{equation, EC-BRZI} is exact, then
	\begin{equation} \label{equation, order of ker(transE,K/F)}
		\#  \Ker(\trans^E_{K/F})= \frac{\# \Sha(E/K). \left(E(F):N^E_{K/F}(E(K)) \right)}{\#\Sha(E_D/F)},
	\end{equation}
	where $N^E_{K/F}\left(E(K)\right) =\{P+ \sigma(P) : P \in E(K)\}$.	
\end{theorem}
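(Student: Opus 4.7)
The starting point is Corollary \ref{corollary, order of the quotient of ker(trans-tilda) over Sha(E/K)}, which applies because \eqref{equation, EC-BRZI} is exact by hypothesis. It yields
\begin{equation*}
\# \Ker(\trans^E_{K/F}) \;=\; \# \Sha(E/F) \cdot \frac{\prod_{v \in M_F} \# H^1(G_w, E(K_w))}{\# H^1(G, E(K))}. \qquad (\ast)
\end{equation*}
Thus the problem reduces to identifying the ratio of cohomological orders on the right with $\# \Sha(E/K) \cdot (E(F) : N^E_{K/F}(E(K))) / \bigl(\# \Sha(E/F) \cdot \# \Sha(E_D/F)\bigr)$.

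To this end I would invoke Qiu's product formula \cite{Qiu} for Tate–Shafarevich groups under quadratic twist, which is tailor-made for precisely this setting. For $G = \langle \sigma \rangle$ of order two, the decomposition of $E(K) \otimes \mathbb{Q}$ into $\sigma$-eigenspaces identifies the ``plus part'' with $E(F)$ and the ``minus part'' with $E_D(F)$ (up to finite kernels), while Tate cohomology periodicity gives
\begin{equation*}
\widehat{H}^0(G, E(K)) \;=\; \frac{E(F)}{N^E_{K/F}(E(K))} \;=\; H^2(G, E(K)).
\end{equation*}
Qiu combines these inputs to express $\#\Sha(E/K)$ in terms of $\#\Sha(E/F)$, $\#\Sha(E_D/F)$, the local factors $\#H^1(G_w, E(K_w))$ at the primes in his set $S$, the global factor $\#H^1(G,E(K))$, and the index $(E(F) : N^E_{K/F}(E(K)))$. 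The plan is then to read off from Qiu's formula the identity
\begin{equation*}
\frac{\prod_{v \in M_F} \# H^1(G_w, E(K_w))}{\# H^1(G, E(K))} \;=\; \frac{\# \Sha(E/K) \cdot (E(F) : N^E_{K/F}(E(K)))}{\# \Sha(E/F) \cdot \# \Sha(E_D/F)},
\end{equation*}
and substitute this into $(\ast)$ to obtain \eqref{equation, order of ker(transE,K/F)}.

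The main obstacle is a careful bookkeeping exercise: Qiu's formula is typically stated relative to a finite set $S$ containing the ramified primes and the primes of bad reduction of $E$ and $E_D$, and one has to verify that restricting the product $\prod_{v \in M_F} \# H^1(G_w, E(K_w))$ to $S$ introduces no error, i.e.\ that $H^1(G_w, E(K_w))$ is trivial outside $S$ (which follows from the fact that for $v \notin S$, $K_w/F_v$ is unramified and $E$ has good reduction there, so $E(K_w)$ is $G_w$-cohomologically trivial in degree $1$). A secondary subtlety is matching the archimedean contributions correctly, since at an infinite place $w$, $E(K_w) = E_0(K_w)$ by our convention, and the size of $H^1(G_w, E(K_w))$ must be inserted directly. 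Once these bookkeeping checks are in place, the identity is a formal consequence of $(\ast)$ together with Qiu's formula.
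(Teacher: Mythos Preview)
Your approach is essentially the same as the paper's: derive $(\ast)$ from the exactness of \eqref{equation, EC-BRZI} (the paper does this directly from the exact sequence rather than via Corollary~\ref{corollary, order of the quotient of ker(trans-tilda) over Sha(E/K)}, but this is the same content), and then substitute a Sha-product formula for the cohomological ratio. The only real difference is that the paper invokes Yu's Main Theorem \cite[Main Theorem]{Yu} directly, which already reads
\[
\frac{\# \Sha(E/F)\cdot \# \Sha(E_D/F)}{\# \Sha(E/K)}=\frac{ \# \widehat{H}^0(G,E(K))\cdot \# H^1(G,E(K))}{\prod_{v \in M_F} \# H^1\left(G_w,E(K_w)\right)},
\]
with the product taken over \emph{all} $v\in M_F$; combined with $\#\widehat{H}^0(G,E(K))=(E(F):N^E_{K/F}(E(K)))$ this gives your displayed identity immediately and makes the bookkeeping you flag (restriction to the finite set $S$, archimedean contributions) entirely unnecessary. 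Going through Qiu's refinement works too, but is a detour here.
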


\begin{proof}
	From  \eqref{equation, EC-BRZI} we have
	\begin{equation} \label{equation, order of trans-tilde from our exact sequence}
		\# \Ker(\trans^E_{K/F})=\frac{\# \Sha(E/F).\prod_{v \in M_F}  H^1\left(G_w,E(K_w)\right)}{\# H^1\left(G,E(K)\right)}.
	\end{equation}
	On the other hand, under finiteness assumption of Tate-Shafarevich groups, Yu \cite[Main Theorem]{Yu} proved
	\begin{equation} \label{equation, Yu's main theorem}
		\frac{\# \Sha(E/F). \# \Sha(E_D/F)}{\# \Sha(E/K)}=\frac{ \# \widehat{H}^0(G,E(K)). \# H^1(G,E(K))}{\prod_{v \in M_F} \# H^1\left(G_w,E(K_w)\right)}.
	\end{equation}
 Since $\#\widehat{H}^0(G,E(K))=\left(E(F):N^E_{K/F}(E(K))\right)$, using equations \eqref{equation, order of trans-tilde from our exact sequence} and \eqref{equation, Yu's main theorem} we obtain the desired equality.
\end{proof}

The example below shows that the exactness of  \eqref{equation, EC-BRZI}, equivalently $\Image(\mathcal{I})=0$ by Theorem \ref{theorem, EC-BRZI}, is a necessary condition in Theorem \ref{theorem, order of ker(trans-tilde)}.

\begin{example} \label{example, EC-BRZ is not exact}
	For the elliptic curve
	\begin{equation*}
		E/\mathbb{Q}: y^2=x^3-x+1/4
	\end{equation*}
	and the imaginary quadratic field $K=\mathbb{Q}(\sqrt{D})$ with
	\begin{align*}
		D \in \{&-7,-11,-47,-71,-83,-84,-127,-159,-164,-219,-231,\\
		&	-263,-271,-287,-292,-303,-308,-359,-371,-404,-443,-447,-471
		\},
	\end{align*}
	we have $\Sha(E/K)=\Sha(E_D/\mathbb{Q})=0$  and $\left(E(\mathbb{Q}):N^E_{K/\mathbb{Q}}(E(K))\right)=2$, see \cite[part (2) of Theorem 4.1]{Qiu}. In particular,  $ \Ker(\trans^E_{K/\mathbb{Q}})=0$ and  equality \eqref{equation, order of ker(transE,K/F)} doesn't hold. Hence by Theorem \ref{theorem, order of ker(trans-tilde)}, we conclude that for the given elliptic curve $E/\mathbb{Q}$ and the imaginary quadratic fields $K$, \eqref{equation, EC-BRZI} can not be exact. Theorem \ref{theorem, EC-BRZI} shows that the map
	\begin{equation*} 
		\mathcal{I}:\Ost_c(E,K/F) \rightarrow \Coker(\mathcal{G})
	\end{equation*}
is nonzero. Also since $ \Ker(\trans^E_{K/\mathbb{Q}})=0$, by Corollary \ref{corollary, Ker(I) is isomorphic to Ostrowski quotient for elliptic curves} the map $\mathcal{I}$ is injective. Now
	by \cite[Lemma 5]{Yu}, one has
	\begin{equation*}
		\# \Ost_c(E,K/F)=\# \Image(\mathcal{I}) \leq \# \widehat{H}^0(G,E(K))=  \left(E(\mathbb{Q}):N^E_{K/\mathbb{Q}}(E(K))\right)=2.
	\end{equation*}
Consequently,
	\begin{equation*}
	\Ost_c(E,K/F) \simeq \Image(\mathcal{I})  \simeq  \mathbb{Z}/2\mathbb{Z} \hookrightarrow \Coker(\mathcal{G}) \simeq \widehat{E(\mathbb{Q})}^*=\Hom_{\cts}(\widehat{E(\mathbb{Q})},\mathbb{Q}/\mathbb{Z}),
	\end{equation*}
where $\widehat{E(\mathbb{Q})}$ is defined as in Theorem \ref{theorem, global duality theorem restated by Yu}. Note that $E(\mathbb{Q}) \simeq \mathbb{Z}$, see \cite[proof of Theorem 4.1]{Qiu}. 
\end{example}

	\begin{proposition} \cite[$\S$1--2]{Qiu} \label{proposition, Qiu results}
Let $E/F$ be an elliptic curve defined over $F$, and $K=F(\sqrt{D})$ be a quadratic extension of $F$ with $G=Gal(K/F)=<\sigma>$. Denote by $E_D$ the quadratic $D$-twist of $E$. Also denote by $r_F$ and $r_{D,F}$ the $\mathbb{Z}$-rank of $E(F)$ and $E_D(F)$, respectively.
	The following assertions hold.
		\begin{itemize}
			
			
			\item[(i)] $\# H^1\left(G,E(K)\right)= 2^{r_{D,F}-r_F}.\left(E(F): N^E_{K/F}(E(K))\right)$.
			
			\vspace*{0.25cm}

			\item[(ii)] Assume that  $\Sha(E/F)$, $\Sha(E_D/F)$ and $\Sha(E/K)$ are finite. Also let
				\begin{equation*}
				S= \{v \in M_F^0 \,  : \, v \, \text{is ramified in} \, K \, \text{or} \, E \, \text{has bad reduction at} \, v \},
			\end{equation*}
			and
			\begin{equation*}
				S_0=\{v \in S : v \, \text{is ramified on inertial in} \, K \}.
			\end{equation*}
			Then
					\begin{itemize}
				\item[(ii-1)] 
				\begin{equation} \label{equation, Refined Yu's formula by Qiu}
					\frac{\# \Sha(E/F).  \# \Sha(E_D/F)}{\# \Sha(E/K)}=2^{r_{D,F}-r_F-\delta(E,F,K)}. \left(E(F):N(E(K))\right)^2;
				\end{equation}
				\item[(ii-2)]
				 		\begin{equation} \label{equation, order of local H0's}
				 		\prod_{v \in M_F} \#H^1(G_w,E(K_w))=\prod_{v \in S_0 \cup M_F^{\infty}} \#H^1(G_w,E(K_w))=2^{\delta(E,F,K)}.
				 	\end{equation} 
			\end{itemize}
			where $\delta(E,F,K)=\delta_{\infty} + \delta_f$ with
				\vspace*{0.15cm}
			\begin{itemize}
				\item 
				$\delta_{\infty}=\#\{v \in M_F^{\infty} : v \, \text{is ramified in} \, K \, \text{and} \, \sigma_{v}(\Delta(E)) >0\}$,
				
				\vspace*{0.15cm}
				
				\item 
				$\delta_f=\sum_{v \in S_0}\log_2\left(E(F_v):N_{K_w/F_v}^E(E(K_{w}))\right)$.
			\end{itemize}
		Here $\Delta(E)$ denotes the discriminant of $E$ over $F$, $\sigma_{v}$ denotes the real embedding of $F$ corresponding to $v \in M_F^{\infty}$, and $N_{K_w/F_v}^E$ denotes the local norm map.

		\end{itemize}
	\end{proposition}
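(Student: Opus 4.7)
The plan is to prove the three assertions in order, using the Herbrand quotient of the cyclic group $G=\langle\sigma\rangle$ of order $2$ for the global cohomology, a place-by-place local analysis for the local cohomology, and then Yu's main identity \eqref{equation, Yu's main theorem} to deduce (ii-1) formally from (i) and (ii-2).

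For part (i), I would invoke the Herbrand quotient $h(A)=\#\widehat{H}^0(G,A)/\#H^1(G,A)$, which is multiplicative in short exact sequences and trivial on finite $G$-modules. Since $E(K)$ is finitely generated by Mordell--Weil, $h(E(K))$ depends only on the $\mathbb{Q}[G]$-module structure of $E(K)\otimes\mathbb{Q}$. Using the twist isomorphism that identifies $E(K)^{\sigma=-1}\otimes\mathbb{Q}$ with $E_D(F)\otimes\mathbb{Q}$, this module decomposes as $\mathbb{Q}^{r_F}\oplus\mathbb{Q}(\mathrm{sgn})^{r_{D,F}}$. Since $h(\mathbb{Z}_{\mathrm{triv}})=2$ and $h(\mathbb{Z}_{\mathrm{sgn}})=1/2$, multiplicativity and vanishing on the finite torsion part give $h(E(K))=2^{r_F-r_{D,F}}$. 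Combined with the standard identification $\widehat{H}^0(G,E(K))=E(F)/N_{K/F}^E(E(K))$, this rearranges to (i).

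For part (ii-2) I would run through the places of $F$ one class at a time. If $v$ splits in $K$ then $G_w$ is trivial and the contribution vanishes. If $v\notin S$ is finite and inert, one uses the formal-group/reduction filtration on $E(K_w)$ together with Lang's theorem for $\tilde{E}$ over the residue field to conclude $H^1(G_w,E(K_w))=0$. For $v\in S_0$ finite, Tate local duality for elliptic curves (in its cyclic-of-order-$2$ Herbrand form) identifies $\#H^1(G_w,E(K_w))=(E(F_v):N_{K_w/F_v}^E(E(K_w)))$, producing the factor $2^{\delta_f}$ after taking logs. Finally, for $v$ archimedean the group is nonzero precisely when $v$ is real and ramified in $K$ with $\sigma_v(\Delta(E))>0$, in which case a direct computation with $E(\mathbb{C})\simeq\mathbb{C}/\Lambda$ and the two connected components of $E(\mathbb{R})$ shows $H^1(\mathrm{Gal}(\mathbb{C}/\mathbb{R}),E(\mathbb{C}))\simeq\mathbb{Z}/2$, producing the factor $2^{\delta_\infty}$. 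Multiplying all contributions yields $2^{\delta(E,F,K)}$ and shows that the product collapses to the places in $S_0\cup M_F^{\infty}$.

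Part (ii-1) is then mechanical: substituting $\#H^1(G,E(K))=2^{r_{D,F}-r_F}\cdot(E(F):N_{K/F}^E(E(K)))$ from (i), $\#\widehat{H}^0(G,E(K))=(E(F):N_{K/F}^E(E(K)))$, and $\prod_v\#H^1(G_w,E(K_w))=2^{\delta(E,F,K)}$ from (ii-2) into Yu's identity \eqref{equation, Yu's main theorem} and rearranging yields the claimed formula. The main obstacle I anticipate is the local analysis in (ii-2): the archimedean case requires the real-analytic description of $E(\mathbb{R})$ in terms of $\mathrm{sgn}\,\sigma_v(\Delta(E))$, and the case $v\in S_0$ requires pinning down the exact order via Tate local duality rather than merely exhibiting a perfect pairing. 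Once those local inputs are in hand, (i) reduces to Herbrand-quotient bookkeeping and (ii-1) is a one-line substitution.
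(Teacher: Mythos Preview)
The paper does not provide its own proof of this proposition: it is stated with the citation \cite[\S1--2]{Qiu} and no argument is given, since these are results quoted from Qiu's paper. There is therefore no ``paper's proof'' to compare your proposal against.

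That said, your proposal is a correct and standard reconstruction of how these formulas are established, and it is essentially the argument one finds in the literature. A few minor comments: in (ii-2), for finite $v\in S_0$ you appeal to Tate local duality, but what you actually need (and implicitly use) is the simpler fact that the Herbrand quotient of $E(K_w)$ is $1$ for finite $v$, which follows from the formal-group filtration and finiteness of the successive quotients; this gives $\#H^1(G_w,E(K_w))=\#\widehat{H}^0(G_w,E(K_w))$ directly. Your treatment of the archimedean places and of (i) via the Herbrand quotient on $E(K)\otimes\mathbb{Q}\cong\mathbb{Q}^{r_F}\oplus\mathbb{Q}(\mathrm{sgn})^{r_{D,F}}$ is exactly right, and (ii-1) is indeed a one-line substitution into Yu's identity once (i) and (ii-2) are in hand.
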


	\begin{corollary} \label{corollary, if E(F)=N(E(K)) then Sha(ED/F) divides Sha(E/K)}
Keep the same assumptions and notations as in Proposition \ref{proposition, Qiu results}. Then the sequence \eqref{equation, EC-BRZI} can be rewritten as follows
			{\footnotesize
				\begin{equation} \label{equation, EC-BRZ in terms of S0 and infiinte places}
					0 \rightarrow    \Ker\left(\widetilde{\Res}_{K/F}^E\right) \rightarrow  H^1\left(G,E(K)\right)   \rightarrow \bigoplus_{v \in S_0 \cup M_F^{\infty}} H^1\left(G_{w},E(K_w)\right) \rightarrow \frac{\Ker(\trans^E_{K/F})}{\widetilde{\Res}_{K/F}^E\left(\Sha(E/F)\right)} \rightarrow  0.
			\end{equation}}
		Furthermore, if the above sequence is exact, i.e., the map $\mathcal{I}$ \eqref{equation, map I in Yu paper} is zero, then
			\begin{equation} \label{equation, order transE,K/F based on Qiu results}
		\# \Ker(\trans^E_{K/F})=\frac{2^{r_F+\delta(E,F,K)-r_{D,F}}.\#\Sha(E/F) }{\left(E(F):N^E_{K/F}(E(K)) \right)}.
			\end{equation}
	\end{corollary}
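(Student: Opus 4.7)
The plan is to separate the corollary into its two assertions and handle each in turn. For the first, which rewrites \eqref{equation, EC-BRZI} as \eqref{equation, EC-BRZ in terms of S0 and infiinte places}, I would invoke equality \eqref{equation, order of local H0's} of Proposition \ref{proposition, Qiu results}(ii-2): it asserts that
\[
\prod_{v \in M_F} \#H^1(G_w,E(K_w)) \;=\; \prod_{v \in S_0 \cup M_F^{\infty}} \#H^1(G_w,E(K_w)),
\]
and since every local factor is a positive integer, this forces $H^1(G_w,E(K_w)) = 0$ for each $v \notin S_0 \cup M_F^{\infty}$. Dropping these vanishing summands identifies the two direct sums (and hence the two sequences) term by term.

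For the cardinality formula in the second assertion, I would assume exactness of \eqref{equation, EC-BRZ in terms of S0 and infiinte places} (which, by Theorem \ref{theorem, EC-BRZI}, is equivalent to $\mathcal{I} = 0$) and take the alternating product of orders of its four terms. First I would check finiteness of everything in sight: $\Sha(E/F)$ is finite by hypothesis, so $\Ker(\widetilde{\Res}_{K/F}^E)$ is as well; $H^1(G,E(K))$ is finite because $G$ is finite and $E(K)$ is finitely generated by Mordell--Weil; the direct sum $\bigoplus_{v \in S_0 \cup M_F^{\infty}} H^1(G_w,E(K_w))$ is finite by \eqref{equation, order of local H0's}; and $\Ker(\trans_{K/F}^E) \subseteq \Sha(E/K)^G$ is finite by the hypothesis on $\Sha(E/K)$. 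Using the identity $\#\widetilde{\Res}_{K/F}^E(\Sha(E/F)) = \#\Sha(E/F)/\#\Ker(\widetilde{\Res}_{K/F}^E)$, the factor $\#\Ker(\widetilde{\Res}_{K/F}^E)$ cancels out of the alternating product and one is left with
\[
\# \Ker(\trans_{K/F}^E) \;=\; \frac{\# \Sha(E/F)\cdot \prod_{v \in S_0 \cup M_F^{\infty}}\# H^1(G_w,E(K_w))}{\# H^1(G,E(K))}.
\]

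To finish, I would substitute the closed-form inputs from Proposition \ref{proposition, Qiu results}: part (i), namely $\# H^1(G,E(K)) = 2^{r_{D,F}-r_F}\bigl(E(F):N_{K/F}^E(E(K))\bigr)$, and part (ii-2), namely $\prod_{v}\# H^1(G_w,E(K_w)) = 2^{\delta(E,F,K)}$. Routine simplification of the exponents of $2$, paying attention to the sign reversal $r_{D,F}-r_F \mapsto r_F-r_{D,F}$ produced by inverting the denominator, then yields \eqref{equation, order transE,K/F based on Qiu results} exactly. Since every ingredient is already in place in the paper, I do not anticipate a substantive obstacle; the proof is essentially a bookkeeping exercise blending the exact sequence furnished by Theorem \ref{theorem, EC-BRZI} with Qiu's local computations.
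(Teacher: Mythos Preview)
Your proof is correct. For the first assertion you proceed exactly as the paper does, invoking Proposition~\ref{proposition, Qiu results}(ii-2) to kill the local terms outside $S_0\cup M_F^\infty$. For the second assertion your route differs slightly from the paper's: the paper quotes Theorem~\ref{theorem, order of ker(trans-tilde)} (which expresses $\#\Ker(\trans^E_{K/F})$ in terms of $\#\Sha(E/K)$ and $\#\Sha(E_D/F)$ via Yu's main theorem) and then eliminates those Sha-orders using Qiu's refined formula \eqref{equation, Refined Yu's formula by Qiu}; you instead take the alternating product in the exact sequence directly---which is precisely the intermediate identity \eqref{equation, order of trans-tilde from our exact sequence} inside the proof of Theorem~\ref{theorem, order of ker(trans-tilde)}---and substitute Qiu's formulas (i) and (ii-2) for $\#H^1(G,E(K))$ and $\prod_v\#H^1(G_w,E(K_w))$. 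Your path is a bit more direct since it avoids passing through $\Sha(E/K)$ and $\Sha(E_D/F)$ and back, while the paper's path has the advantage of reusing an already-stated theorem; the two computations are of course equivalent.
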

	
	\begin{proof}
		The first assertion immediately follows from Proposition \ref{proposition, Qiu results}. Also the equality \eqref{equation, order transE,K/F based on Qiu results} is obtained from Theorem \ref{theorem, order of ker(trans-tilde)} and the relation \eqref{equation, Refined Yu's formula by Qiu}.
	\end{proof}

	Using Proposition \ref{proposition, Qiu results}, we can find an infinite family of elliptic curves $E$ over quadratic extensions $K/F$ whose coarse Ostrowski quotients are trivial.
	
	
	\begin{example}
		Let  $K=\mathbb{Q}(\sqrt{p})$ where $p \equiv 1 (\mathrm{mod}\, 8)$ is an odd prime number, and $E$ be the $\mathbb{Q}$-curve  $y^2=x^3-x$. Denote by $E_p$ the $p$-twist of $E$, i.e., $E_p/\mathbb{Q}:y^2=x^3-p^2x$. One can show that the $\mathbb{Z}$-ranks of $E(\mathbb{Q})$ and $E_p(\mathbb{Q})$ are zero, and 
		\begin{equation*}
\left(E(\mathbb{Q}):N_{K/\mathbb{Q}}^{E}(E(K))\right)=4,
		\end{equation*}
	see \cite[proof of Theorem 3.3]{Qiu}. Hence by the part (i) of Proposition \ref{proposition, Qiu results} we have
	\begin{equation*}
	\# H^1(G,E(K))=4,
	\end{equation*}
where $G=\Gal(K/\mathbb{Q})$. On the one hand $\Sha(E/\mathbb{Q})=0$ \cite[proof of Theorem 3.3]{Qiu}, which implies that  $\widetilde{\Res}_{K/\mathbb{Q}}^E$, as in \eqref{equation, res-tilda}, is the zero map. By Lemma \ref{lemma, isomorphism between Ker(epsilon-tilda) and Ker(F)} we find
\begin{equation} \label{equation, H1 is embedded into sumH0s}
	H^1(G,E(K)) \hookrightarrow \bigoplus_{v \in M_{\mathbb{Q}}} H^1(G_w,E(K_w)).
\end{equation} 
On the other hand, since $\delta(E,\mathbb{Q},K)=2$ \cite[Lemma 3.2]{Qiu}, assuming the Tate-Shafarevich groups $\Sha(E/\mathbb{Q}(\sqrt{p}))$ and  $\Sha(E_p/\mathbb{Q})$ are finite, we can use the equality \eqref{equation, order of local H0's} to obtain
\begin{equation} \label{equation, product of order H0s are 4}
	 \prod_{v \in M_{\mathbb{Q}}} \# H^1(G_w,E(K_w))=4.
\end{equation}
Using the relations \eqref{equation, H1 is embedded into sumH0s} and \eqref{equation, product of order H0s are 4}, we conclude that $\Ost_c(E,K/F)=0$. Similarly one can show that $\Ost_c(E_p,K/F)=0$, see \cite[Remark (1) after Theorem 3.3]{Qiu}.
	
	\end{example}

\begin{remark}
 Recall that the notion of  Ostrowski quotient $\Ost(K/F)$ is defined for $K/F$ an \textit{arbitrary (not necessarily Galois)} finite extension of number fields, see Definition \ref{definition, Ostrowski group}. Whereas, in this paper, we investigated analogous notion of Ostrowski quotient for elliptic curves only over finite \textit{Galois extensions} of number fields. So it remains open to study elliptic analog of Ostrowski quotient in the non-Galois case. As a point of departure, regarding Remark \ref{remark, Ostrowski and coarse Ostrowski may coincide}, one may consider those elliptic curves $E$ over non-Galois extensions $K/F$ for which $E/K$ has good reductions at all primes of $K$. For instance, one may investigate (coarse) Ostrowski quotient of the $\mathbb{Q}$-curve 
	\begin{equation*}
		E: y^2=x^3+x^2-114x-127
	\end{equation*}
	over the cubic extension $\mathbb{Q}(\sqrt[3]{28})/\mathbb{Q}$, since $E/K$ has good reduction everywhere \cite[$\S$3, Example 1]{K}. Note that this curve is labeled 196B1 in \cite[p. 111]{Cre}, for which $E(F)$ is finite of order $\#E(F)=\#E(\mathbb{Q})=[K:\mathbb{Q}]=3$.
\end{remark}


	\bibliographystyle{amsplain}

\end{document}